	\setlist[enumerate,1]{label={(\roman*)}}
	\setlist[enumerate]{itemsep=3pt,parsep=0pt,before={\parskip=0pt}}
	\setlist[itemize]{itemsep=3pt,parsep=0pt}
\DeclareMathOperator{\Span}{span}
\DeclareMathOperator{\supp}{supp}
\DeclareMathOperator{\Tr}{Tr}
\DeclareMathOperator{\wt}{wt}
\DeclareMathOperator{\RePart}{Re}
\renewcommand{\epsilon}{\varepsilon}
\renewcommand{\phi}{\varphi}
\renewcommand{\Re}{\RePart}
\newcommand{\F}{\mathbb{F}}
\newcommand{\C}{\mathbb{C}}
\newcommand{\Q}{\mathbb{Q}}
\newcommand{\Z}{\mathbb{Z}}
\newcommand{\Cu}{\C^{\times}}
\newcommand{\Fp}{\F_p}
\newcommand{\Fpu}{\F_p^{\times}}
\newcommand{\Fpusquared}{\F_p^{\times 2}}
\newcommand{\Fq}{\F_q}
\newcommand{\Fqu}{\F_q^{\times}}
\newcommand{\Fqusquared}{\F_q^{\times 2}}
\newcommand{\Fqucubed}{\F_q^{\times 3}}
\newcommand{\Fqum}{\F_q^{\times m}}
\newcommand{\conj}[1]{\overline{#1}}
\newcommand{\ft}[1]{\widehat{#1}}
\newcommand{\groupring}{\C[\Fq]}
\newcommand{\pgroupring}{\C[\Fp]}
\newcommand{\achars}{\ft{\F}_q}
\newcommand{\ftcodomain}{\C^{\achars}}
\newcommand{\chifuncs}{{\groupring}^\chi}
\newcommand{\mchars}{\widehat{\Fqu}}
\newcommand{\primemchars}{\widehat{\Fpu}}
\newcommand{\Hchars}{\widehat{H}}
\theoremstyle{plain}
\newtheorem{theorem}{Theorem}[section]
\newtheorem{lemma}[theorem]{Lemma}
\newtheorem{proposition}[theorem]{Proposition}
\newtheorem{corollary}[theorem]{Corollary}
\theoremstyle{definition}
\newtheorem{definition}[theorem]{Definition}
\newtheorem{example}[theorem]{Example}
\newtheorem{problem}[theorem]{Problem}
\newtheorem{remark}[theorem]{Remark}
\begin{document}
\title{An improved uncertainty principle for functions with symmetry}

\author{Stephan Ramon Garcia}\thanks{This paper is based upon work of Stephan Ramon Garcia supported in part by the National Science Foundation under Grant DMS-1800123, by a David L.~Hirsch III and Susan H.~Hirsch Research Initiation Grant, and by the Institute for Pure and Applied Mathematics (IPAM) Quantitative Linear Algebra program.}
\address{Department of Mathematics, Pomona College, Claremont, California, United States}
\email{stephan.garcia@pomona.edu}
\urladdr{\url{http://pages.pomona.edu/~sg064747}}

\author{Gizem Karaali}
\address{Department of Mathematics, Pomona College, Claremont, California, United States}
\email{gizem.karaali@pomona.edu}
\urladdr{\url{http://pages.pomona.edu/~gk014747/}}

\author{Daniel J.~Katz}\thanks{This paper is based upon work of Daniel J.~Katz supported in part by the National Science Foundation under Grants DMS-1500856 and CCF-1815487.}
\address{Department of Mathematics, California State University, Northridge, United States}
\email{daniel.katz@csun.edu}
\urladdr{\url{https://www.csun.edu/~danielk/}}

\date{21 May 2021}

\begin{abstract}
Chebotar\"ev proved that every minor of a discrete Fourier matrix of prime order is nonzero.  We prove a generalization of this result that includes analogues for discrete cosine and discrete sine matrices as special cases.  We establish these results via a generalization of the Bir\'o--Meshulam--Tao uncertainty principle to functions with symmetries that arise from certain group actions, with some of the simplest examples being even and odd functions.  We show that our result is best possible and in some cases is stronger than that of Bir\'o--Meshulam--Tao.  Some of these results hold in certain circumstances for non-prime fields; Gauss sums play a central role in such investigations.
\end{abstract}

\keywords{Fourier transform, discrete Fourier transform, DFT, discrete cosine transform, DCT, discrete sine transform, DST, uncertainty principle, support, minor, finite field, Gauss sum, sumset}

\subjclass[2010]{43A25, 43A32, 42A99, 11C20, 11T24, 11T99, 15A15, 15B99}

\maketitle

\section{Introduction}\label{Albert}

Chebotar\"ev proved that every minor of a discrete Fourier matrix of prime order is nonzero; see
\cite{Stevenhagen,Dieudonne,Resetnyak,Goldstein,Newman,Frenkel,Evans, Tao}.
In 2005, Terence Tao provided a new proof of Chebotar\"ev's theorem and obtained
an improved uncertainty principle for complex-valued functions on prime fields \cite{Tao}.
This lower bound on the sum of the size of the support of a function and the size of the support of its Fourier transform was also independently discovered
by Andr\'as Bir\'o \cite{Biro} and Roy Meshulam \cite{Meshulam}
(see \cite{Frenkel} and \cite[p.~122]{Tao} for details about the provenance of the result).

It is common to apply the Fourier transform to functions that exhibit some symmetry, for example, even or odd functions.
We show that the lower bound in the Bir\'o--Meshulam--Tao principle can 
be strengthened for these, and much more generally, for functions with symmetries arising from certain group actions.
We prove broad generalizations of Chebotar\"ev's theorem and the Bir\'o--Meshulam--Tao principle, which yield uncertainty bounds that are best possible for the class of functions with the specified symmetry, and sometimes stronger than those provided by 
Bir\'o--Meshulam--Tao.  Moreover, our explorations in the case of non-prime fields reveal interesting
phenomena that are worthy of further study (see Problem \ref{Jorah}).

\subsection{Nonvanishing minors and Chebotar\"ev's theorem}

A square matrix has the \emph{nonvanishing minors property} if each minor of the matrix is nonzero.
We do not restrict our attention to principal minors, that is, we permit the removal of any $k$ distinct rows and any $k$ distinct columns.
We consider the determinant of the original matrix itself as one of its minors, and each entry of the matrix is a minor since it is the determinant of a $1\times 1$ submatrix.

The $n \times n$ matrix
\begin{equation}\label{Vincent}
    F_n = \frac{1}{\sqrt{n}} \begin{bmatrix} 
    1 & 1 & 1 & \cdots & 1 \\
    1 & \zeta^{-1} & \zeta^{-2} & \cdots & \zeta^{-(n-1)} \\
    1 & \zeta^{-2} & \zeta^{-4} & \cdots & \zeta^{-2(n-1)} \\ 
    \vdots & \vdots & \vdots & \ddots & \vdots \\
    1 & \zeta^{-(n-1)} & \zeta^{-2(n-1)} & \cdots & \zeta^{-(n-1)^2} \end{bmatrix},
\end{equation}
in which $\zeta = \exp(2\pi i/n)$, is the \emph{discrete Fourier transform matrix} (or \emph{Fourier matrix}) of order $n$.
It is symmetric, unitary, and satisfies $F_n^4 = I$.

If $n = rs$, in which $1 < r,s < n$, and if we index the rows and columns of
$F_n$ from $0$ to $n-1$, then the minor of $F_n$
that corresponds to rows $\{0,r\}$ and columns $\{0,s\}$ is zero since
it is the determinant of the $2 \times 2$ all-ones matrix.  On the other hand,  
Chebotar\"ev's theorem tells us that no minor of $F_n$ vanishes if $n$ is prime.

\begin{theorem}[Chebotar\"ev]\label{James}
$F_n$ has the nonvanishing minors property if and only if $n$ is prime or $n=1$.
\end{theorem}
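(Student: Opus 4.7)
The reverse direction is already handled in the paragraph preceding the theorem: if $n=rs$ with $1<r,s<n$, the minor of $F_n$ indexed by rows $\{0,r\}$ and columns $\{0,s\}$ is the $2\times 2$ all-ones determinant, and the case $n=1$ is trivial. For the forward (hard) direction, fix a prime $p$ and $k$-subsets $R=\{r_1<\cdots<r_k\}$ and $C=\{c_1<\cdots<c_k\}$ of $\{0,1,\dots,p-1\}$. After discarding the nonzero prefactor $p^{-k/2}$, the corresponding minor is
\[
M \;=\; \det\bigl[\zeta^{r_i c_j}\bigr]_{i,j=1}^{k} \;=\; P(\zeta^{r_1},\dots,\zeta^{r_k}), \qquad P(x_1,\dots,x_k) := \det[x_i^{c_j}] \in \Z[x_1,\dots,x_k],
\]
and the goal is to show $M\neq 0$ in the integral domain $\Z[\zeta]$.

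My first step is to factor $P$ via the bialternant formula, writing $P(x) = s_\lambda(x) \cdot \prod_{i<j}(x_j - x_i)$, where $\lambda$ is the partition with parts $\lambda_j = c_{k+1-j} - (k-j)$. Because $r_1,\dots,r_k$ are distinct in $\{0,\dots,p-1\}$, the values $\zeta^{r_1},\dots,\zeta^{r_k}$ are distinct $p$-th roots of unity, so the Vandermonde factor $\prod_{i<j}(\zeta^{r_j} - \zeta^{r_i})$ is a nonzero element of $\Z[\zeta]$. It therefore remains to prove that $s_\lambda(\zeta^{r_1},\dots,\zeta^{r_k}) \neq 0$. To do this I would pass to the Dedekind domain $\Z[\zeta]$, in which the single prime above $p$ is generated by $\omega := 1-\zeta$, with residue field $\Z[\zeta]/(\omega) \cong \F_p$. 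Since each $\zeta^{r_i} \equiv 1 \pmod\omega$ and $s_\lambda$ has integer coefficients, the reduction of $s_\lambda(\zeta^{r_1},\dots,\zeta^{r_k})$ modulo $\omega$ equals the integer $s_\lambda(1,1,\dots,1)$ viewed in $\F_p$, so it suffices to show the latter is a unit mod $p$.

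The hook-content formula delivers the explicit expression
\[
s_\lambda(\underbrace{1,\dots,1}_{k}) \;=\; \prod_{(i,j)\in\lambda} \frac{k+j-i}{h(i,j)}.
\]
Because $\lambda_1 = c_k - (k-1) \leq p-k$, the diagram of $\lambda$ fits inside a $k \times (p-k)$ rectangle, and for every cell $(i,j) \in \lambda$ one has $k+j-i \in \{1,2,\dots,p-1\}$ and $h(i,j) \in \{1,2,\dots,p-1\}$. Both the numerator and denominator products are then coprime to $p$, so their quotient (a positive integer) is a unit mod $p$. Hence $s_\lambda(\zeta^{r_1},\dots,\zeta^{r_k}) \not\equiv 0 \pmod\omega$, and $M$ is the product of two nonzero elements of $\Z[\zeta]$.

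The principal obstacle is the accounting around the hook-content formula: one must verify that the constraints $R,C \subseteq \{0,\dots,p-1\}$ uniformly force the contents $k+j-i$ and the hook lengths $h(i,j)$ to lie in $\{1,\dots,p-1\}$. Primality of $p$ enters in two essential places — to ensure that $(1-\zeta)$ is a prime ideal in $\Z[\zeta]$ (equivalently, that the $p$-th cyclotomic polynomial is irreducible) and to conclude that elements of $\{1,\dots,p-1\}$ are invertible mod $p$. That the method must fail for composite $n$ is consistent with, and indeed forced by, the explicit vanishing minor exhibited before the theorem.
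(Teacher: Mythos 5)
Your argument is correct, and it supplies an actual proof of Chebotar\"ev's theorem, whereas the paper never proves Theorem~\ref{James} at all: it cites Stevenhagen--Lenstra, Dieudonn\'e, and others for proofs, and the only thing it uses downstream is the equivalent polynomial ``weight'' lemma (Lemma~\ref{Fred}), also quoted from the literature without proof. So your proposal and the paper are not competing derivations of the same result so much as your proposal filling a gap the paper deliberately leaves to references.

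Checking the details of your route: the bialternant identity $\det[x_i^{c_j}] = s_\lambda(x)\prod_{i<j}(x_j - x_i)$ with $\lambda_j = c_{k+1-j}-(k-j)$ is exact (the sign from reversing columns is absorbed into writing the Vandermonde as $\prod_{i<j}(x_j-x_i)$ rather than $\prod_{i<j}(x_i-x_j)$); $\lambda$ is a genuine partition with at most $k$ parts and $\lambda_1 \le p-k$; the Vandermonde factor is nonzero because the $\zeta^{r_i}$ are distinct; and the reduction $\Z[\zeta]/(1-\zeta)\cong\F_p$ sends $s_\lambda(\zeta^{r_1},\dots,\zeta^{r_k})$ to $s_\lambda(1^k)\bmod p$. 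For the hook-content bookkeeping, the contents $k+j-i$ range over $1$ to $k+\lambda_1-1\le p-1$, and the largest hook length is $\lambda_1+\lambda_1'-1\le (p-k)+k-1=p-1$, so both products in $\prod_{(i,j)\in\lambda}(k+j-i)/h(i,j)$ are coprime to $p$ and the quotient, a positive integer, is a unit mod $p$. Everything closes up.

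By way of comparison: the approach the paper reduces to (Lemma~\ref{Fred}, \`a la Frenkel and Tao) phrases the statement as a bound $\wt(f)>m$ for a complex polynomial of degree $<p$ vanishing at $m$ distinct $p$-th roots of unity, and is typically proved by applying operators like $x\frac{d}{dx}$ and reducing modulo $p$; it is more elementary (no symmetric-function theory) and is the form most convenient for the paper's later proof of Theorem~\ref{Theresa}. Your route, which is essentially the one in the Evans--Isaacs reference cited by the paper, trades that elementary manipulation for structural information --- the explicit Schur-times-Vandermonde factorization and the hook-content formula --- and in exchange yields a very transparent ``reason'' the minor is nonzero, namely a concrete positive rational number that is visibly a $p$-adic unit. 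Both proofs hinge on the same arithmetic input, that $(1-\zeta)$ is a prime of $\Z[\zeta]$ lying above $p$ with residue field $\F_p$, so in spirit they are cousins; yours simply arranges it via the algebraic combinatorics of Schur polynomials rather than via calculus on polynomials.
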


This was first posed to Chebotar\"ev by Ostrovski\u{\i}, who was unable to find a proof; see \cite{Stevenhagen} for Chebotar\"ev's proof and historical background.
Chebotar\"ev's theorem was independently rediscovered by Dieudonn\'e in 1970 \cite{Dieudonne}.
Other proofs can be found in \cite{Resetnyak,Dieudonne,Goldstein,Newman,Frenkel,Evans-Isaacs}.

One of our main results (Theorem \ref{Theresa}) 
is a broad generalization of Chebotar\"ev's theorem
that encompasses several other familiar matrices as special cases.
We defer the general result, which is stated in terms of a general class of symmetries based on group actions, until Section \ref{Sandor} and instead devote the following section to a few special cases with commonly encountered symmetries.  
An exploration of the situation for non-prime fields is contained in Section \ref{Egbert}.

\subsection{Discrete cosine and sine transforms}

For odd $n$, the \emph{discrete cosine transform (DCT) matrix $C_n$ of modulus $n$} 
is the $\frac{n+1}{2} \times \frac{n+1}{2}$ matrix with rows and columns indexed from 
$0$ to $(n-1)/2$ and whose entry in row $r$ and column $s$ is
\begin{equation*}
(C_n)_{r,s} = \begin{cases}
\sqrt{1/n} & \text{if $r=s=0$,} \\[5pt]
\sqrt{2/n} & \text{if $r=0$ or $s=0$, but not both,} \\[5pt]
\dfrac{2\cos(2\pi r s/n)}{\sqrt{n}} & \text{otherwise.}
\end{cases}
\end{equation*}
In other words,
\begin{equation}\label{Caesar}
C_n = \frac{2}{\sqrt{n}}
\begin{bmatrix} 
\frac{1}{2} & \frac{1}{\sqrt{2}}  & \frac{1}{\sqrt{2}} & \cdots & \frac{1}{\sqrt{2}}  \\[5pt] 
\frac{1}{\sqrt{2}} & \cos \frac{2 \pi}{n} &  \cos \frac{4 \pi}{n} & \cdots &  \cos \frac{(n-1) \pi}{n} \\[3pt]
\frac{1}{\sqrt{2}} & \cos \frac{4 \pi}{n} &  \cos \frac{8 \pi}{n} & \cdots &  \cos \frac{2(n-1) \pi}{n} \\ 
\vdots & \vdots & \vdots & \ddots & \vdots \\[3pt]
\frac{1}{\sqrt{2}} & \cos \frac{(n-1) \pi}{n} &  \cos \frac{2(n-1) \pi}{n} & \cdots &  \cos \frac{(n-1)^2 \pi}{2 n}
\end{bmatrix}.
\end{equation}
There are many variants of ``the'' discrete cosine transform matrix in the literature \cite{Ahmed-Natarajan-Rao,Strang}.
The one selected above is natural from the perspective that it is real and unitary (hence orthogonal), symmetric, and satisfies $C_n^2 = I$.
Discrete cosine transform matrices arise in many engineering and computer science applications, such as signal processing and image compression \cite{Gonzalez}.
Such matrices are important because even functions can be expressed more compactly in terms of $(n+1)/2$ cosine functions via the discrete cosine transform as compared to their expression in terms of $n$ complex exponential functions via the comparable discrete Fourier transform; for this reason we consider the discrete cosine transform a compressed Fourier transform.
By an even discrete function, we mean a function $f\colon G \to \C$ where $G$ is an abelian group (written additively) and where $f(-g)=f(g)$ for every $g \in G$; the matrix $C_n$ is used for even functions where $G=\Z/n\Z$.

If $n$ is an odd composite number, we can write 
$n = r s$ with $1 < r,s \leq (n-1)/2$.
Then the minor of $C_n$ corresponding to rows $\{0,r\}$ and columns $\{0,s\}$ is zero.
Thus, if $C_n$ has the nonvanishing minors property, then $n$ is not composite.
The converse is also true.

\begin{theorem}\label{Colin}
Let $n\geq 1$ be odd.
The discrete cosine transform matrix $C_n$ has the nonvanishing minors property
if and only if $n$ is prime or $n=1$.
\end{theorem}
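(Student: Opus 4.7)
The forward direction was established immediately before the theorem statement: when $n$ is odd and composite, writing $n=rs$ with $1<r,s\le(n-1)/2$ exhibits a vanishing $2\times 2$ minor of $C_n$. I would therefore focus on the converse, showing that every minor of $C_n$ is nonzero when $n$ is prime (the case $n=1$ being immediate).

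My plan is to reduce to Chebotar\"ev's theorem (Theorem \ref{James}) via the involution $\tau\colon x\mapsto -x$ on $\Z/n\Z$, exploiting the fact that the DCT is essentially the DFT restricted to $\tau$-invariant functions. Given a $k\times k$ submatrix $M=C_n[R,S]$ with $R,S\subseteq\{0,1,\dots,(n-1)/2\}$, I would form the $\tau$-symmetrized sets
\[
\tilde R = R \cup \{n-r : r\in R,\ r>0\},\qquad \tilde S = S \cup \{n-s : s\in S,\ s>0\},
\]
of sizes $2k-[0\in R]$ and $2k-[0\in S]$ respectively, and compare $F_n[\tilde R,\tilde S]$ to $M$. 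Since $\tilde R$ and $\tilde S$ are $\tau$-invariant and the Fourier transform commutes with $\tau$, the map $F_n[\tilde R,\tilde S]\colon \C^{\tilde S}\to \C^{\tilde R}$ preserves the decomposition of its domain and codomain into the $\pm1$-eigenspaces of $\tau$ and is therefore block diagonal, with an even block $A$ and an odd block $B$. Using $\cos(2\pi rs/n)=\tfrac12(\zeta^{rs}+\zeta^{-rs})$ one checks that, up to conjugation by diagonal matrices with positive entries absorbing the DCT normalizations, $A$ coincides with $M$ itself while $B$ is a DST-style submatrix on $R\setminus\{0\}$ and $S\setminus\{0\}$; thus $\det M\neq 0$ if and only if $A$ has full rank $k$.

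By Chebotar\"ev's theorem every square submatrix of $F_n$ has nonzero determinant, which implies $F_n[\tilde R,\tilde S]$ has full rank $\min(|\tilde R|,|\tilde S|)$; additivity of rank under block decomposition then gives $\mathrm{rank}(A)+\mathrm{rank}(B)=\min(|\tilde R|,|\tilde S|)$. Combined with the obvious upper bounds $\mathrm{rank}(A)\le k$ and $\mathrm{rank}(B)\le k-\max([0\in R],[0\in S])$, this forces $\mathrm{rank}(A)=k$ in each of the four cases for $([0\in R],[0\in S])$. The slightly delicate case is the mismatched one, say $0\in R$ and $0\notin S$, where $F_n[\tilde R,\tilde S]$ is non-square of shape $(2k-1)\times 2k$ but still has rank $2k-1$, so $B$ being a $(k-1)\times k$ block of rank at most $k-1$ forces $A$ to have rank $k$.

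The main obstacle I anticipate is the bookkeeping needed to identify the even block explicitly with a rescaled copy of $M$ and to track how the normalization factors and the row/column corresponding to the $\tau$-fixed index $0$ fit into the picture; once that identification is pinned down, the rank dichotomy above finishes the proof.
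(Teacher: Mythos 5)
Your proof is correct, but it takes a genuinely different route from the paper. The paper derives Theorem~\ref{Colin} as an immediate corollary of the general Theorem~\ref{Theresa}: Example~\ref{Edith} shows $C_p$ is (up to row/column scaling) a $(\chi,R)$-compressed Fourier matrix for $H=\{1,-1\}$, trivial $\chi$, and $R=\{0,\dots,(p-1)/2\}$, and the proof of Theorem~\ref{Theresa} converts a hypothetical vanishing minor indexed by $A,B\subseteq R$ into a nonzero polynomial of weight at most $|HA|$ that vanishes at $|HB|$ distinct $p$th roots of unity, contradicting the polynomial restatement of Chebotar\"ev's theorem (Lemma~\ref{Fred}) once one shows $|HB|\geq|HA|$; this counting requires the transposition trick to assume WLOG that if column $0$ appears then so does row $0$. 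You instead symmetrize the index sets to $\tilde R,\tilde S$, observe that the $\tau$-equivariance of $F_n$ makes $F_n[\tilde R,\tilde S]$ block-diagonal with respect to the even/odd decomposition, identify the even block with (a rescaling of) the DCT minor, and deduce its invertibility from the rank equality $\mathrm{rank}(F_n[\tilde R,\tilde S])=\min(|\tilde R|,|\tilde S|)$ given by Theorem~\ref{James} together with the additivity of rank across the two blocks and the obvious dimension bounds on the odd block. The two arguments exploit the same symmetrization ($\tilde R = HR$, $\tilde S = HS$), but yours uses the matrix form of Chebotar\"ev and a rank count in place of the polynomial form and a weight count, and your four-case rank analysis replaces the paper's WLOG transposition step; the bookkeeping you flag as the remaining work (rescaling column and row $0$ by $\sqrt{2}$ and $1/\sqrt{2}$ respectively to match \eqref{Caesar}) does go through as you expect. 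What your approach buys is a more self-contained and elementary reduction for the DCT specifically; what the paper's approach buys is that the single Theorem~\ref{Theresa} simultaneously covers the DFT, DCT, DST, and every other $(\chi,R)$-compressed Fourier matrix over a prime field for arbitrary $H\leq\Fpu$ and arbitrary $\chi$, which is the generality the rest of the paper (in particular the uncertainty principle of Section~\ref{Evelyn}) actually relies on.
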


This result arises as a special case of a much more general theorem (Theorem \ref{Theresa})
concerning Fourier analysis of functions that respect certain group actions; see Remark \ref{Clarence}.  In some instances,
generalizations of Theorem \ref{Theresa} are possible over non-prime fields, although the details are subtle; see Section \ref{Egbert}.

Theorem \ref{Theresa} also applies to the discrete sine transform, a compressed Fourier transform for odd functions on $\Z/n\Z$.
(An odd function on an abelian group $G$ under addition is an $f\colon G \to \C$ with $f(-g)=-f(g)$ for every $g \in G$.)
For odd $n \geq 3$, the \emph{discrete sine transform (DST) matrix $S_n$ of modulus $n$} is the $\frac{n-1}{2} \times \frac{n-1}{2}$ matrix with rows and columns indexed from $1$ to $(n-1)/2$ and whose entry in row $r$ and column $s$ is
\begin{equation*}
(S_n)_{r,s} = \frac{2 \sin(2\pi r s/n)}{\sqrt{n}}.
\end{equation*}
In other words,
\begin{equation}\label{Simon}
S_n 
=  \frac{2}{\sqrt{n}} 
\begin{bmatrix}  
\sin \frac{2 \pi}{n} &  \sin \frac{4 \pi}{n} & \cdots &  \sin \frac{(n-1) \pi}{n} \\[3pt]  
\sin \frac{4 \pi}{n} &  \sin \frac{8 \pi}{n} & \cdots &  \sin \frac{2(n-1) \pi}{n} \\
\vdots & \vdots & \ddots & \vdots \\[3pt]  
\sin \frac{(n-1) \pi}{n} &  \sin \frac{2(n-1) \pi}{n} & \cdots &  \sin \frac{(n-1)^2 \pi}{2 n} 
\end{bmatrix}
.
\end{equation}
This matrix is real and unitary (hence orthogonal), symmetric, and satisfies $S_n^2=I$.
If $n$ is an odd composite number, we can write $n=r s$ with $1 < r,s \leq (n-1)/2$.  Then the $(r,s)$-entry of $S_n$ is zero.
Thus, $n$ must be prime for $S_n$ to have the nonvanishing minors property.
The converse is also true.

\begin{theorem}\label{Sidney}
Let $n\geq 3$ be odd.  The discrete sine transform matrix $S_n$ has the nonvanishing minors property if and only if $n$ is prime.
\end{theorem}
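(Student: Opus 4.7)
The reverse direction (if $S_n$ has the nonvanishing minors property then $n$ is prime) is already settled in the excerpt via the vanishing entry at $(r,s)$ when $n = rs$. So the task is to prove the forward direction: if $p$ is an odd prime then every minor of $S_p$ is nonzero. My plan is to reduce this to Chebotar\"ev's theorem (Theorem~\ref{James}) by realizing $S_p$ as the ``odd part'' of the full Fourier matrix $F_p$ under the action of $\{\pm 1\}$ on $\F_p$.

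Fix a $k \times k$ minor of $S_p$ indexed by rows $R \subseteq \{1, 2, \ldots, (p-1)/2\}$ and columns $C \subseteq \{1, 2, \ldots, (p-1)/2\}$, and set $\zeta = e^{2\pi i /p}$. Enlarge these to $\widetilde{R} = R \cup (-R)$ and $\widetilde{C} = C \cup (-C)$ in $\F_p$; each set has exactly $2k$ elements because the negatives lie in $\{(p+1)/2,\ldots,p-1\}$, disjoint from $R$ and $C$. By Theorem~\ref{James}, the $2k \times 2k$ submatrix $M$ of $F_p$ at $(\widetilde{R},\widetilde{C})$ has nonzero determinant. Now order the rows of $M$ as $r_1,\ldots,r_k,-r_1,\ldots,-r_k$ and the columns analogously. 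Ignoring the overall $1/\sqrt{p}$ normalization, $M$ acquires the block form
$$M = \begin{pmatrix} A & B \\ B & A \end{pmatrix}, \qquad A_{ij} = \zeta^{r_i c_j}, \quad B_{ij} = \zeta^{-r_i c_j}.$$
Multiplying on the left by $\bigl(\begin{smallmatrix} I & -I \\ I & I \end{smallmatrix}\bigr)$ and on the right by $\bigl(\begin{smallmatrix} I & I \\ -I & I \end{smallmatrix}\bigr)$ decouples the even and odd parts and yields
$$\det M \;=\; \det(A-B)\,\det(A+B)$$
up to the nonzero scalar produced by the two change-of-basis factors. Since $(A - B)_{ij} = \zeta^{r_i c_j} - \zeta^{-r_i c_j} = 2 i \sin(2 \pi r_i c_j / p)$, the factor $\det(A - B)$ is a nonzero scalar multiple of the chosen minor of $S_p$, while $\det(A + B)$ is a DCT-type cosine minor. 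Because $\det M \neq 0$, both factors are nonzero, and in particular the DST minor is nonzero.

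The main obstacle is the block-diagonalization step: one must verify that the ``sum/difference'' change of basis simultaneously decouples \emph{all} the paired indices $(r_i,-r_i)$ and $(c_j,-c_j)$, and contributes only a nonzero overall scalar to the determinant. This is a routine but careful computation with commuting block operations. A slicker alternative is to invoke the general Theorem~\ref{Theresa} directly, applied to the action of $\{\pm 1\}$ on $\F_p$ with $\chi$ the nontrivial character: odd functions vanish at $0$ and are determined by their values on $\{1,\ldots,(p-1)/2\}$, and under this identification $S_p$ \emph{is} (up to normalization) the restriction of $F_p$ to the odd subspace, so Theorem~\ref{Theresa} applies verbatim, exactly as in Remark~\ref{Clarence} for the DCT case.
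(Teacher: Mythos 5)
Your proposal is correct, and your primary argument is genuinely different from the paper's. The paper obtains Theorem~\ref{Sidney} as an immediate corollary of the general Theorem~\ref{Theresa}, as spelled out in Example~\ref{Edward} and Remark~\ref{Clarence}; Theorem~\ref{Theresa} in turn is proved via Lemma~\ref{Fred}, the polynomial-weight reformulation of Chebotar\"ev's theorem, by showing a vanishing minor would produce a sparse polynomial of degree $<p$ with too many $p$th-root-of-unity zeros, carefully tracking the orbit $\{0\}$ when $\chi$ is trivial. Your block-diagonalization route bypasses Lemma~\ref{Fred} entirely and applies Chebotar\"ev's theorem directly to the Fourier matrix $F_p$: you double the index sets to $\widetilde{R}=R\cup(-R)$, $\widetilde{C}=C\cup(-C)$ (of sizes exactly $2k$ since $R,C\subseteq\{1,\dots,(p-1)/2\}$ are disjoint from their negatives), note the $\begin{pmatrix}A & B\\ B & A\end{pmatrix}$ block structure, and use the conjugation by $\bigl(\begin{smallmatrix} I & \pm I\\ \mp I & I\end{smallmatrix}\bigr)$ to get $\det M = \det(A-B)\det(A+B)$; since $A-B$ is $(i\sqrt{p})$ times the chosen DST submatrix, the sine minor must be nonzero. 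The computation you flag as needing care does in fact go through: $\det\bigl(\begin{smallmatrix} I & -I\\ I & I\end{smallmatrix}\bigr)=\det\bigl(\begin{smallmatrix} I & I\\ -I & I\end{smallmatrix}\bigr)=2^k$, matching the $2^{2k}$ appearing in the block-diagonal form. What your approach buys is a concrete, short, self-contained argument for the DST (and simultaneously the cosine minor $\det(A+B)$, so long as row/column $0$ is not involved); what it costs is generality, since the orbit-doubling step relies on $|H|=2$ and on $0$ being absent from the index sets, whereas the paper's polynomial-weight approach handles arbitrary $H\leq\Fpu$, arbitrary characters $\chi$, and the degenerate orbit $\{0\}$ in one uniform argument. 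Your closing remark correctly identifies the paper's own route as the alternative.
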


\subsection{Uncertainty principles}\label{Ulrich}
Let $p$ be a prime and let $\Fp=\Z/p\Z$ be the field of order $p$.
Let $\supp(f)$ denote the support of a function $f$, that is, 
the subset of the domain of $f$ on which $f$ does not vanish.
We use $|\cdot|$ to denote the cardinality of a set.
The \emph{Fourier transform} of
$f\colon \Fp \to \C$ is the function
$\ft{f} \colon \Fp \to \C$ defined by 
\begin{equation}\label{Beric}
\ft{f}(a)=\sum_{b \in \Fp} f(b) \exp(-2\pi i ab/p).
\end{equation}
In this context, the Donoho--Stark uncertainty principle (proved earlier by Matolcsi and Sz\H{u}cs in greater generality) states that
\begin{equation}\label{Gregor}
|\supp (f)|\, |\supp(\ft{f})| \geq p
\end{equation}
if $f \neq 0$ \cite{Matolcsi, Donoho}.
A remarkable improvement upon \eqref{Gregor}
is due, independently, to Andr\'as Bir\'o \cite{Biro}, 
Roy Meshulam \cite{Meshulam}, and Terence Tao \cite{Tao}
(see also \cite{MurtyWhang, MurtySurvey, Bonami}):

\begin{theorem}[Bir\'o--Meshulam--Tao]\label{Hugo}
If $f\colon \Fp \to \C$ is not identically zero, then
\begin{equation}\label{Petyr}
|\supp(f)|+|\supp(\ft{f})| \geq p+1.
\end{equation}
\end{theorem}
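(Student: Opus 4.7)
The plan is to deduce Theorem \ref{Hugo} directly from Chebotar\"ev's theorem (Theorem \ref{James}) by a short linear-algebra argument, in the style of Tao's proof. Set $A = \supp(f)$ and $B = \supp(\ft{f})$, with $k = |A|$ and $\ell = |B|$. I will argue by contradiction: suppose $k + \ell \leq p$, so that the complement $B^c = \Fp \setminus B$ satisfies $|B^c| = p - \ell \geq k$.

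Next I would write down the constraint coming from $\supp(\ft{f}) \subseteq B$. For every $a \in B^c$,
\[
0 = \ft{f}(a) = \sum_{b \in A} f(b) \exp(2\pi i ab/p),
\]
which is a homogeneous linear equation in the $k$ unknowns $\{f(b) : b \in A\}$. Since $|B^c| \geq k$, I can fix any subset $T \subseteq B^c$ with $|T| = k$ and keep only the equations indexed by $a \in T$; up to the scalar factor $1/\sqrt{p}$ appearing in \eqref{Vincent}, the coefficient matrix of this system is exactly the $k \times k$ submatrix of $F_p$ obtained by selecting rows $T$ and columns $A$.

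The finishing stroke is Chebotar\"ev's theorem: every such submatrix is nonsingular. Therefore the only solution of the system is $f(b) = 0$ for all $b \in A$, contradicting $f \not\equiv 0$. Hence $k + \ell \geq p+1$, as desired.

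The conceptual step that makes the argument work, and that distinguishes \eqref{Petyr} from the weaker classical bound \eqref{Gregor}, is the decision to exploit the $p - \ell$ linear equations provided by the \emph{vanishing} of $\ft{f}$ on $B^c$ rather than the $\ell$ values of $\ft{f}$ on $B$; Chebotar\"ev then furnishes the needed invertibility with no further effort. There is no real obstacle here once Theorem \ref{James} is in hand, which is why one expects (and the authors later confirm) that the same template will yield a strengthened uncertainty principle whenever an appropriate nonvanishing-minors theorem is available for matrices of Fourier type restricted to symmetric functions.
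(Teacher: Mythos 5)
Your argument is correct and is exactly Tao's original linear-algebra deduction of the uncertainty principle from Chebotar\"ev's theorem, which the paper cites rather than reproving. It is also the same template the authors use to prove their generalization (Proposition \ref{Melanie}, hence Theorem \ref{Mary}): there one chooses a set $T$ of $H$-orbit representatives, of the same size as $\supp_R(f)$, disjoint from $\supp_R(\ft{f})$, and invokes the nonvanishing minors property of the compressed Fourier matrix to force $f=0$; specializing to $H=\{1\}$ with trivial $\chi$ recovers your proof verbatim.
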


The crucial improvement over \eqref{Gregor} is the additive nature of \eqref{Petyr}.
Theorem \ref{Hugo}
is best possible in the following strong sense.
Given $S, T \subseteq \Fp$ with $|S|+|T|\geq p+1$, 
there is an $f\colon \Fp \to \C$ with $\supp(f)=S$ and $\supp(\ft{f})=T$.
Chebotar\"ev's theorem is at the heart of the proof; in fact, it is equivalent to \eqref{Petyr}.

The Bir\'o--Meshulam--Tao uncertainty principle concerns 
generic functions from $\Fp$ to $\C$.  
We obtain a stronger version of Theorem \ref{Hugo} for functions that respect certain group actions.
Moreover, our lower bounds are never inferior to those of Bir\'o--Meshulam--Tao.
We require a bit of notation before presenting these results.

As before, let $p$ be a prime and let $\Fp$ be the field of order $p$.
Let $H$ be a subgroup of the unit group $\Fpu$ (denoted 
$H \leq \Fpu$) and let $\chi \colon H \to \Cu$ be a character (a group homomorphism).
A function $f \colon \Fp \to \C$ such that $f(h x)=\chi(h) f(x)$ for every $h \in H$ and $x \in \Fp$ is called \emph{$\chi$-symmetric}.
Some simple examples follow.
\begin{itemize}
\item If $H=\{1\}$, then $\chi$ is trivial and 
every function from $\Fp$ to $\C$ is $\chi$-symmetric.
\item If $p$ is an odd prime, $H=\{1,-1\}$, and $\chi$ is the trivial character (the constant function $1$ on $H$), a $\chi$-symmetric function is one with $f(-x)=f(x)$ for all $x \in \Fp$, that is, an \emph{even function}.
\item If $p$ is an odd prime, $H=\{1,-1\}$, and $\chi$ is the character with $\chi(-1)=-1$, a $\chi$-symmetric function is one with $f(-x)=-f(x)$ for all $x \in \Fp$, that is, an \emph{odd function}.
\item If $d|(p-1)$, $|H| = \frac{p-1}{d}$, and $\chi$ is the trivial character on $H$,
then a $\chi$-symmetric function is one that is constant on each orbit in $\F_p$ under the action of multiplication by elements of the subgroup $H$.  We call these orbits \emph{$H$-orbits}; they are the cosets of $H$ in $\Fpu$ and the singleton set $\{0\}$.  An \emph{$H$-closed} set is one that is a union of $H$-orbits.
\end{itemize}

The following is what we call the \emph{strong uncertainty principle} for $\chi$-symmetric functions over prime fields.  It is proved later as Theorem \ref{Mary}.
\begin{theorem}\label{Margaret}
Let $p$ be a prime, let $H \leq \Fpu$, and let $\chi \colon H \to \Cu$ be a character.
Suppose that $f \colon \Fp \to \C$ is a $\chi$-symmetric function and $f\neq 0$.
\begin{enumerate}
\item\label{Alice} If $\chi$ is nontrivial, then
\begin{equation*}
|\supp(f)|+|\supp(\ft{f})| \geq p+|H|-1.
\end{equation*}

\item\label{Barbara} If $\chi$ is trivial, then
\begin{equation*}
|\supp(f)|+|\supp(\ft{f})| \geq
\begin{cases}
p+2|H|-1 & \text{if $f(0)=0$ and $\ft{f}(0)=0$},\\
p+|H| & \text{if $f(0)=0$ or $\ft{f}(0)=0$},\\
p+1 & \text{otherwise}.
\end{cases}
\end{equation*}
\end{enumerate}
\end{theorem}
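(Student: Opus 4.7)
The plan is to follow Tao's derivation of Theorem~\ref{Hugo} from Chebotar\"ev's theorem, with our generalized nonvanishing-minors result (Theorem~\ref{Theresa}) playing the role of the classical one. First I would record the relevant covariance: substituting $b \mapsto h^{-1} b$ in \eqref{Beric} shows that if $f$ is $\chi$-symmetric, then $\ft{f}$ is $\conj{\chi}$-symmetric. Consequently both $\supp(f)$ and $\supp(\ft{f})$ are unions of $H$-orbits in $\Fp$, where the $H$-orbits are the cosets of $H$ in $\Fpu$ (each of size $|H|$) together with the singleton $\{0\}$. Since $f(0) = \chi(h) f(0)$ for every $h \in H$, a nontrivial $\chi$ forces $f(0) = 0$, and the same covariance argument applied to $\ft{f}$ yields $\ft{f}(0) = 0$ in that case.

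Set $S = \supp(f)$ and $T = \supp(\ft{f})$; let $s$ and $t$ count the $H$-orbits contained in $S \cap \Fpu$ and $T \cap \Fpu$, and set $u = (p-1)/|H| - t$. Let $W_S$ be the space of $\chi$-symmetric functions on $\Fp$ supported in $S$, and let $U$ be the space of $\conj{\chi}$-symmetric functions on $\Fp \setminus T$. Taking one orbit function $\sum_{h \in H} \chi(h) \delta_{hb}$ per $H$-orbit as a basis (plus $\delta_0$ where permitted) gives $\dim W_S = s + \epsilon$ and $\dim U = u + \epsilon'$, where $\epsilon = 1$ when $\chi$ is trivial and $0 \in S$ (else $\epsilon = 0$), and $\epsilon' = 1$ when $\chi$ is trivial and $0 \notin T$ (else $\epsilon' = 0$). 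The restricted Fourier map $\Phi \colon W_S \to U$, $\Phi(g) = \ft{g}|_{\Fp \setminus T}$, has a matrix whose entries at nonzero orbit representatives $(a,b)$ are the $\chi$-twisted sums $\sum_{h \in H} \chi(h) \exp(2 \pi i a h b / p)$, together with appropriate constants at any $0$-indexed row or column. Theorem~\ref{Theresa} guarantees that every minor of this matrix is nonzero, so $\Phi$ has full column rank whenever $\dim W_S \leq \dim U$. Hence a nonzero $f \in \ker \Phi$ forces $\dim W_S \geq \dim U + 1$, i.e., $s + \epsilon \geq u + \epsilon' + 1$.

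Finally, $|S|$ equals $s|H|$ or $s|H| + 1$ according as $0 \notin S$ or $0 \in S$, and likewise for $|T|$; multiplying $s + \epsilon \geq u + \epsilon' + 1$ by $|H|$ and substituting these relations in each of the four cases---$\chi$ nontrivial, and $\chi$ trivial with any combination of $f(0)$ and $\ft{f}(0)$ vanishing or not---recovers the bounds claimed in Theorem~\ref{Margaret}. The main obstacle will be fitting the matrix of $\Phi$, including the possibly present boundary row and column arising from the singleton $0$-orbit in the $\chi$-trivial case, into the precise family covered by Theorem~\ref{Theresa}; once that identification is made, the remaining work is a routine dimension count together with the four-way case analysis.
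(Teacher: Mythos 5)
Your plan is correct and follows essentially the same route as the paper (Proposition \ref{Melanie} followed by Theorem \ref{Mary}): both arguments exploit the nonvanishing minors of the $(\chi,R)$-compressed Fourier matrix to force a restricted Fourier map to have trivial kernel, then translate the resulting orbit-count inequality into the claimed support bounds by straightforward counting. The ``obstacle'' you anticipate at the end is not one: with the bases $\{u_{\chi,s}\}$ and $\{\delta_{\epsilon_y}\}$ the matrix of $\Phi$ is literally a submatrix of the $(\chi,R)$-compressed Fourier matrix (whose $0$-indexed row and column entries are simply the constant $|H|$ when $\chi$ is trivial), so Theorem \ref{Theresa} applies directly to every minor that appears.
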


\begin{remark}\label{Raphael}
Since $|H| \geq 2$ whenever $H$ admits a nontrivial character, 
our lower bounds are never worse than those of the 
Bir\'o--Meshulam--Tao uncertainty principle (Theorem \ref{Hugo}).
We recover their result if $H=\{1\}$ and $\chi$ is the trivial character of $H$.

The $\chi$-symmetry of the function $f$ in Theorem \ref{Margaret} implies that the supports of both $f$ and $\ft{f}$ are $H$-closed (that is, unions of $H$-orbits), and the orbit $\{0\}$ cannot be in the supports when $\chi$ is nontrivial.
(See Lemma \ref{William} and Corollary \ref{Rebecca} for proofs.)
Thus, when precisely one of $f$ or $\ft{f}$ vanishes at $0$, we know that $|\supp(f)|+|\supp(\ft{f})| \equiv 1 \pmod{|H|}$; this can be combined with Theorem \ref{Hugo} to deduce the lower bound of $p+|H|$ given as the second case of Theorem \ref{Margaret}\ref{Barbara}.
Similarly, when both $f$ and $\ft{f}$ vanish at $0$, we can deduce a lower bound of $p+|H|-1$, which recapitulates Theorem \ref{Margaret}\ref{Alice}, but this combination of Theorem \ref{Hugo} and careful counting is still strictly weaker than the result in the first case of Theorem \ref{Margaret}\ref{Barbara}.
\end{remark}

We illustrate our uncertainty principle with some numerical examples.
\begin{example}
If $p$ is an odd prime, $f \colon \Fp \to \C$ is even, and $f\neq0$, then
\begin{equation*}
|\supp(f)|+|\supp(\ft{f})| \geq
\begin{cases}
p+3 & \text{if $f(0) = \ft{f}(0) = 0$},\\[5pt]
p+2 & \text{if $f(0)=0$ or $\ft{f}(0)=0$}.
\end{cases}
\end{equation*}
Following Remark \ref{Raphael}, the support of an even function $f$ is even in size if $f$ vanishes at $0$, or odd in size if $f$ does not vanish at $0$, and the same principle applies to $\ft{f}$.
Thus, when precisely one of $f$ or $\ft{f}$ vanishes at $0$, we can deduce the lower bound of $p+2$ from Theorem \ref{Hugo} and this counting principle.
But the same technique applied to the case when both $f$ and $\ft{f}$ vanish at $0$ cannot be used to improve the bound of $p+1$ given by Theorem \ref{Hugo}, and the results of this paper give the strictly stronger bound of $p+3$.
\end{example}

\begin{example}
Let $p=37$ and let $H < \F_p^{\times}$ have order $4$.
If $\chi$ is the trivial character on $H$, then $f:\F_p\to\C$ is $\chi$-symmetric if and only if $f$ is constant on each of the $H$-orbits, which consist of $\{0\}$ and nine $H$-cosets with four elements each.
If $f\neq0$ is $\chi$-symmetric, then
\begin{equation*}
|\supp(f)|+|\supp(\ft{f})| \geq 
\begin{cases}
44 & \text{if $f(0) = \ft{f}(0) = 0$},\\
41 & \text{if $f(0) = 0$ or $\ft{f}(0) = 0$},\\
38 & \text{otherwise}.
\end{cases}
\end{equation*}
While the bound of $38$ is from Theorem \ref{Hugo} directly, and the bound of $41$ can be deduced from Theorem \ref{Hugo} along with careful counting as discussed in Remark \ref{Raphael}, the bound of $44$ is not accessible without our new result (Theorem \ref{Margaret}).
\end{example}

Recall from Remark \ref{Raphael} that if $f\colon \Fp \to \C$ is $\chi$-symmetric for some character $\chi \colon H \to \Cu$, then $\supp(f)$ and $\supp(\ft{f})$ are $H$-closed.
The following result, which is a special case of Theorem \ref{Samantha}, shows that Theorem \ref{Margaret} is best possible.

\begin{theorem}\label{Sarah}
Let $p$ be prime,
let $H \leq \Fpu$, and let $\chi \colon H \to \Cu$ be a character.
\begin{enumerate}
\item If $\chi$ is nontrivial, then for any $H$-closed subsets $A$ and $B$ of $\Fpu$ with  
\begin{equation*}
|A|+|B| \geq p+|H|-1,
\end{equation*}
there is a $\chi$-symmetric $f\colon \Fp \to \C$ with $\supp(f)=A$ and $\supp(\ft{f})=B$.

\item If $\chi$ is trivial and $A$ and $B$ are $H$-closed subsets of $\Fp$ with
\begin{equation*}
|A|+|B| \geq
\begin{cases}
p+2|H|-1 & \text{if $0$ is in neither $A$ nor $B$}, \\
p+|H| & \text{if $0$ is in precisely one of $A$ or $B$}, \\
p+1 & \text{if $0$ is in both $A$ and $B$},
\end{cases}
\end{equation*}
then there is a $\chi$-symmetric $f\colon \Fp \to \C$ with 
$\supp(f)=A$ and $\supp(\ft{f})=B$.
\end{enumerate}
\end{theorem}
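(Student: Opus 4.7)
The strategy is to adapt Tao's derivation in \cite{Tao} of the sharpness of Bir\'o--Meshulam--Tao from Chebotar\"ev's theorem to the $\chi$-symmetric setting, with Theorem \ref{Theresa} in place of Chebotar\"ev's theorem.  Let $V\subseteq\C^{\Fp}$ denote the space of $\chi$-symmetric functions with support contained in $A$.  Because $A$ is $H$-closed, each $f\in V$ is determined by its values at a chosen set of $H$-orbit representatives $a_1,\ldots,a_k$ in $A$, so $\dim V=k$ equals the number of $H$-orbits in $A$.  A direct calculation from \eqref{Beric} shows that if $f$ is $\chi$-symmetric then $\ft{f}$ is $\chi^{-1}$-symmetric; in particular $\supp(\ft{f})$ is $H$-closed, and $\ft{f}(0)=0$ whenever $\chi$ is nontrivial.

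Fix orbit representatives $b_1,\ldots,b_\ell$ for the $H$-orbits of $\Fp$ lying outside $B$, omitting $\{0\}$ when $\chi$ is nontrivial (the constraint $\ft{f}(0)=0$ is then automatic) or when $0\in B$.  The condition $\supp(\ft{f})\subseteq B$ translates into the linear system $M\mathbf{x}=0$, where $\mathbf{x}=(f(a_1),\ldots,f(a_k))^\top$ and $M$ is the $\ell\times k$ matrix with entries
\[
M_{j,i}=\sum_{h\in H}\chi(h)\exp\!\bigl(2\pi i\,h a_i b_j/p\bigr).
\]
Each lower bound on $|A|+|B|$ in the statement is arranged to be equivalent to the count $k>\ell$, so $\ker M\ne 0$ and any nonzero element produces a $\chi$-symmetric $f$ with $\supp(f)\subseteq A$ and $\supp(\ft{f})\subseteq B$.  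To upgrade these to equalities, I need two families of proper-subspace conditions inside $\ker M$: for each $i\in\{1,\ldots,k\}$ the subspace $\{\mathbf{x}\in\ker M : x_i=0\}$ should be proper, and for each representative $b$ of an $H$-orbit contained in $B$ the subspace $\{\mathbf{x}\in\ker M : \ft{f}(b)=0\}$ should be proper.  Since a finite union of proper $\C$-subspaces cannot exhaust $\ker M$, a single $\mathbf{x}$ avoiding all of them then exists.  The first condition is equivalent to column $i$ of $M$ lying in the span of the remaining columns, which holds as soon as some $\ell\times\ell$ minor of $M$ omitting column $i$ is nonzero; the second is equivalent to the matrix obtained by appending to $M$ the row $\bigl(\sum_{h}\chi(h)\exp(2\pi i\,h a_j b/p)\bigr)_{j}$ having rank $\ell+1$, which holds as soon as some $(\ell+1)\times(\ell+1)$ minor is nonzero.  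Both enlarged and truncated matrices are square submatrices of the full ``generalized DFT'' whose nonvanishing minors property is the content of Theorem \ref{Theresa}, so every required minor is nonzero.

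The subtlest part of the argument is the bookkeeping, in the trivial-$\chi$ case, around the singleton orbit $\{0\}$, which has size $1$ rather than $|H|$.  Whether $\{0\}$ appears as a column of $M$ (when $0\in A$), as a row (when $0\notin B$), or neither, alters the exact form of the count $k>\ell$.  The three sub-cases of part (ii) of the theorem, with thresholds $p+2|H|-1$, $p+|H|$, and $p+1$, correspond precisely to the three placements of $0$ among $A$ and $B$; in each regime the assumed bound is exactly what is needed to force $k>\ell$, while Theorem \ref{Theresa}, applied to the appropriate generalized DFT matrix, still supplies the required nonvanishing minors.  This correspondence is also what ties the sharpness asserted here to the lower bounds in Theorem \ref{Margaret}.
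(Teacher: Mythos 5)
Your proposal is correct, and it takes a genuinely different route from the paper's. The paper proves Theorem \ref{Sarah} by specializing Theorem \ref{Samantha}, whose proof runs through an intermediate result (Proposition \ref{Reginald}) stated for orbit-restricted supports $S,T\subseteq R$. That proposition first invokes a combinatorial lemma (Lemma \ref{Oliver}, which builds up vectors with arbitrarily large prescribed supports once all supports of one fixed size are attainable) to reduce to the boundary case $|S|+|T|=|R|+1$; it then constructs $f$ by inverting a single square submatrix of the compressed Fourier matrix, and finally upgrades the inclusions $\supp_R(f)\subseteq S$, $\supp_R(\ft f)\subseteq\epsilon_T$ to equalities by appealing to the already-established uncertainty lower bound (Proposition \ref{Melanie}). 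Your proof instead works directly at the level of the full supports $A$ and $B$: you set up the constraint system $M\mathbf{x}=0$ encoding $\supp(\ft f)\subseteq B$, observe that the counting in each case of the hypothesis forces more unknowns than equations so that $\ker M\neq 0$, and then use the fact that a nonzero $\C$-vector space is not a finite union of proper subspaces. Each ``bad'' subspace (a coordinate $x_i$ vanishing, or $\ft f(b)$ vanishing for some orbit representative $b\in B$) is shown proper by exhibiting a nonzero square minor of the compressed Fourier matrix, via Theorem \ref{Theresa}. Both arguments ultimately rest on the nonvanishing minors property, but yours dispenses with Lemma \ref{Oliver} and, more notably, never invokes the uncertainty inequality itself; it therefore gives a logically self-contained construction of the extremal functions. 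The paper's route, by contrast, reuses the uncertainty bound as a shortcut and so proves less machinery afresh but exposes more explicitly how the sharpness and the bound are two faces of the same nonvanishing-minors phenomenon. One small wording point: your ``enlarged'' and ``truncated'' matrices are not themselves square submatrices of the compressed Fourier matrix (they are $(\ell+1)\times k$ and $\ell\times(k-1)$ respectively); it is their maximal square minors you invoke, and these are indeed covered by Theorem \ref{Theresa} precisely because $k>\ell$.
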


Tao \cite{Tao} used the uncertainty principle of Theorem \ref{Hugo} to obtain a novel proof of the Cauchy--Davenport theorem, a foundational result in additive combinatorics \cite{TaoVu}. In some cases we can strengthen this theorem; see Section \ref{Eddard}.

If we consider $\chi$-symmetric functions over a non-prime finite field $\Fq$, then for some characters $\chi$ the functions enjoy a strong uncertainty property analogous to that presented for prime fields in Theorem \ref{Margaret}, but for other characters $\chi$ they do not.
We find (see Theorem \ref{Michael}) that if our group $H$ (the domain of $\chi$) lies in a proper subfield of $\Fq$, then the Fourier transform on the space of $\chi$-symmetric functions does not have the strong uncertainty property.
This is always the case when $H=\{1\}$ or $H=\{-1,1\}$ in a non-prime field, and one consequence of this is that the analogues of the discrete Fourier, cosine, and sine transform matrices have vanishing minors.
But we also find scenarios over non-prime fields that give rise to the strong uncertainty property.
We pose an open question (Problem \ref{Jorah}) that asks for the precise condition needed to obtain the strong uncertainty property over a general finite field.
This paper focuses on uncertainty principles for functions defined on finite groups that have further structure as fields, but fields that are not groups have also been considered.
For example, Murty and Whang \cite{MurtyWhang} have shown that a strong uncertainty property does not hold for general functions over $\Z/n\Z$ with composite $n$, but does hold in special cases, and one could pursue the open problem of determining whether their uncertainty principle could be further sharpened if one further restricts to functions exhibiting certain symmetries.

\subsection{Organization of the paper}

In Section \ref{Beatrice} we establish some notation  
and review Fourier analysis on finite fields.
In Section \ref{Corinna} we investigate $\chi$-symmetry, which generalizes the underlying symmetry of the discrete cosine and sine transform matrices.
This allows us to define our generalization of the discrete cosine and sine transform, called the \emph{compressed Fourier transform} (Definition \ref{Nelson}), and define its natural matrix representations, called \emph{compressed Fourier matrices} (Definition \ref{Nancy}).
In Section \ref{Abigail} we define the strong uncertainty property for a space of $\chi$-symmetric functions and show that this is the best possible lower bound on the sum of the sizes of supports (in Theorem \ref{Samantha}).
We then show that compressed Fourier matrices have the nonvanishing minors property if and only if the Fourier transform on the corresponding space of $\chi$-symmetric functions enjoys the strong uncertainty property.
In Section \ref{Doris} we show that the strong uncertainty property always holds when the underlying field is a prime field: this is Theorem \ref{Mary}, stated above as Theorem \ref{Margaret}.
Then Theorem \ref{Sarah} on the sharpness of our bounds immediately follows from Theorem \ref{Samantha} from Section \ref{Abigail}.
Another corollary of Theorem \ref{Mary} is Theorem \ref{Theresa}, which states that all compressed Fourier matrices over prime fields have the nonvanishing minors property; this proves Theorems \ref{Colin} and \ref{Sidney} above.
We also discuss a refinement of the Cauchy--Davenport theorem when one sums $H$-closed subsets of $\Fpu$ (where $H\leq \Fpu$).
In Section \ref{Egbert} we consider $\chi$-symmetric functions over generic finite fields.
We show some cases where they do not enjoy the strong uncertainty principle and other cases where they do, and close with the open question seeking a criterion for their behavior.

\section{Preliminaries}\label{Beatrice}

If $A$ and $B$ are sets, then $B^A$ denotes the set of all functions from $A$ into $B$.
If $B$ has a zero element and $f \in B^A$, then the \emph{support of $f$} is $\supp(f) = \{a \in A: f(a)\neq 0\}$.
The remainder of this section discusses the additive characters of finite fields and the discrete Fourier transform over finite fields that arises from them.

\subsection{Finite fields and additive characters}\label{Sally}
Let $\Fq$ denote the finite field of order $q$.
An \emph{additive character} of $\Fq$ is a group homomorphism from the additive group $\Fq$ into the multiplicative group $\Cu$.
The \emph{absolute trace} $\Tr\colon \Fq \to \Fp$ from 
$\Fq$ to its prime subfield $\Fp$ is $\Tr(x)=x+x^p+x^{p^2}+\cdots+x^{q/p}$.
The \emph{canonical additive character of $\Fq$} is the function $\epsilon \colon \Fq \to \Cu$ defined by $\epsilon(x)=e^{2\pi i \Tr(x)/p}$.

If $\psi \colon \Fq \to \Cu$ is an additive character and $a \in \Fq$, define $\psi_a \colon \Fq \to \Cu$ by $\psi_a(x)=\psi(a x)$.
Then $\psi_a$ is an additive character and $\psi_1=\psi$.
Thus, $\epsilon_1$ is the canonical additive character and $\epsilon_0$ is the \emph{trivial character}, which maps everything to $1$.
Then $\achars = \{\epsilon_a \colon a \in \Fq\}$ is the group of additive characters from $\Fq$ into $\Cu$.
The map $a \mapsto \epsilon_a$ is a group isomorphism from $\Fq$ (under addition) to $\achars$ (under pointwise multiplication).
Thus, every additive character equals $\epsilon_a$ for some $a \in \Fq$.

If $S \subseteq \F_q$, then $\epsilon_S=\{\epsilon_s: s \in S\}$ is a subset of $\achars$ that contains precisely $|S|$ characters.
In particular, $\epsilon_{\Fq}=\achars$.

\subsection{Group ring}

Consider the group ring $\groupring$, whose elements we write as $f=\sum_{a \in \Fq} f_a [a]$.
We use brackets to distinguish elements of $\Fq$ and $\C$ when these have the same appearance (e.g., $0 \in \Fq$ and $0 \in \C$).
Then $\groupring$ is a $\C$-algebra whose ring multiplication operation is convolution, and whose $\C$-scalar multiplication for $c \in \C$ and $f=\sum_{a \in \Fq} f_a[a] \in \groupring$ is given by $c f=\sum_{a \in \Fq} (c f_a) [a]$: multiplication by the scalar $c$ is the same as ring multiplication by $c[0]$.
One can regard each $f \in \groupring$ as a function $F\colon\Fq\to\C$ by the formula $F(a) = f_a$, so we define $\supp(f) = \{ a \in \Fq : f_a \neq 0\}$.
We apply an additive character $\psi \colon \Fq \to \C$ to group ring elements by linear extension, that is, $\psi\left(\sum_{a \in \Fq} f_a [a]\right)=\sum_{a \in \Fq} f_a \psi(a)$.

\subsection{Fourier transform}\label{Robert}
We require a Fourier transform that (unlike \eqref{Beric}) works for all finite fields (not just those of prime order), and we define one that is more algebraically convenient for our proofs.
The \emph{Fourier transform} of $f \in \groupring$ is the function $\ft{f} \in \ftcodomain$ defined by $\ft{f}(\psi)=\psi(f) \quad \text{for all $\psi \in \achars$}$.
The Fourier transform is an isomorphism of $\C$-algebras from $\groupring$ to $\ftcodomain$, in which $\ftcodomain$ is equipped with pointwise multiplication, and the inverse of the Fourier transform is given by $f_a = \frac{1}{q} \sum_{\psi \in \achars} \conj{\psi(a)} \ft{f}(\psi)$.

The preceding definitions emphasize the difference between the operations on the domain (convolution) and codomain (pointwise multiplication).
Some readers may prefer to use the same domain and codomain (regarded as vector spaces) with the different multiplications only implicitly acknowledged.  
We adopted this notation in Section \ref{Ulrich} for the sake of simplicity.
We offer the following translation between the two perspectives.
\begin{itemize}
\item The domain of the Fourier transform can be regarded as $\C^{\Fq}$ rather than $\groupring$ by applying the $\C$-vector space isomorphism that takes the group ring element $f=\sum_{a \in \Fq} f_a [a]$ to the function $F \colon \Fq \to \C$ with $F(a)=f_a$ for every $a \in \Fq$.  Because of this natural correspondence, we sometimes refer to elements of the group ring as ``functions''.
\item The codomain of the Fourier transform can be regarded as $\C^{\Fq}$ rather than $\ftcodomain$ by applying the $\C$-vector space isomorphism that takes $g \colon \achars \to \C$ to the function $G \colon \Fq \to \C$ with $G(a)=g(\epsilon_{-a})$ for every $a \in \Fq$.
\end{itemize}
Then the Fourier transform of $F\colon \Fq \to \C$ is the function $\ft{F}\colon \Fq \to \C$ defined by $\ft{F}(a)=\sum_{b \in \Fq} F(b) \epsilon_{-a}(b) = \sum_{b \in \Fq} F(b) \epsilon(-a b)$ for every $a \in \Fq$.  If $\Fq$ is the prime field $\Fp$, then $\ft{F}(a)=\sum_{b \in \Fp} F(b) \exp(-2\pi i a b/p)$ for every $a \in \Fp$.
This is the formula \eqref{Beric} from Section \ref{Ulrich}.

\section{$\chi$-symmetry}\label{Corinna}

In this section we introduce the notion of $\chi$-symmetry, which characterizes the functions used to form the discrete cosine matrix \eqref{Caesar}, discrete sine matrix \eqref{Simon}, and their relatives.
We then produce a basis for the subspace of $\chi$-symmetric group ring elements that will help us define generalizations of the discrete cosine and sine transform matrices in Section \ref{Eric}.

\subsection{Multiplication action}

If $H \leq \Fqu$, then $H$ acts on $\Fq$ and on $\Fqu$ by multiplication: $h \cdot a = ha$ for $h\in H$, $a \in \F_q$.
The $H$-orbit of $a \in \Fq$ is $H a=\{h a : h \in H\}$.
If $a \neq 0$, then the preceding is the $H$-coset in $\Fqu$ that contains $a$.
Consequently, the $H$-orbits of $\Fqu$ are the $H$-cosets that comprise the quotient group $\Fqu/H$. The $H$-orbits of $\Fq$ are those of $\Fqu$ along with $H0 = \{0\}$.
An \emph{$H$-closed} subset of $\Fq$ is one that is closed under the action of $H$, that is, a union of $H$-orbits.
If $A, B \subseteq \Fq$, then we write $A B$ to mean $\{a b: a \in A, b \in B\}$.

We extend the action of $H$ to elements of $\groupring$ as follows: $h \cdot \sum_{a \in \Fq} f_a [a] = \sum_{a \in \Fq} f_a[h a]$.
The dot distinguishes this from the group ring product $[h] f=\sum_{a \in \Fq} f_a [h+a]$.

Similarly, $H$ acts on $\achars$ via $h\cdot \psi=\psi_h$, in which $\psi_h$ is defined in Section \ref{Sally}.
The $H$-orbits of $\achars$ are the sets $\epsilon_{H a}$ for $a \in \Fq$.
Thus, the set of nontrivial characters is partitioned into orbits of $|H|$ characters each.
The trivial character, $\epsilon_0$, occupies its own orbit.
An \emph{$H$-closed} subset of $\achars$ is one that is closed under the action of $H$, that is, is a union of $H$-orbits.
If $A \subseteq \Fq$ and $\Psi \subseteq \achars$, then we write $A \Psi$ to mean $\{a \cdot \psi: a \in A, \psi \in \Psi\}=\{\psi_a: a \in A, \psi \in \Psi\}$.

\subsection{Characters of subgroups of $\Fqu$ and $\chi$-symmetry}\label{Priscilla}
A \emph{character} of $H \leq \Fqu$ is a group homomorphism $\chi\colon H \to \Cu$.
The set of all characters of $H$ is a group under pointwise multiplication.
It is isomorphic to $H$ and contains the \emph{trivial character},
which maps every element in $H$ to $1$, as its identity element.

Suppose that $H\leq \Fqu$ and $\chi \colon H \to \Cu$ is a character.
Then we say that $f\in \groupring$ is \emph{$\chi$-symmetric} if and only $f_{h a}=\chi(h) f_a$ for all $h \in H$ and $a \in \Fq$, i.e., if and only if $h\cdot\chi(h) f = f$ for all $h \in H$.
For the rest of this paper, we use $\chifuncs$ to denote the set of all $\chi$-symmetric elements in $\groupring$ when $\chi$ is a character of some subgroup $H$ of $\Fqu$.
Since elements of $\groupring$ are often thought of as functions as described in Section \ref{Robert}, we sometimes refer to elements of $\chifuncs$ as {\it $\chi$-symmetric functions}.
The commutative and the distributive laws in $\groupring$ make $\chifuncs$ a $\C$-subspace of $\groupring$.

This kind of symmetry is also respected by convolution in the following sense.
\begin{lemma}\label{Augustus}
If $\phi$ and $\chi$ are characters from $H \leq \Fqu$ into $\Cu$, 
if $f \in \groupring$ is $\phi$-symmetric, and
if $g \in \groupring$ is $\chi$-symmetric, 
then $f g$ is $\phi\chi$-symmetric.
\end{lemma}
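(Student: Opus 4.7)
The plan is to reduce the lemma to the fact that multiplication by $h \in H$ is a ring endomorphism of $\F_q$, so the induced action of $H$ on $\groupring$ is compatible with the convolution product.

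First I would prove the intermediate identity
\[
h \cdot (fg) = (h \cdot f)(h \cdot g)
\]
for all $h \in H$ and $f, g \in \groupring$. By $\C$-bilinearity it suffices to verify this on basis elements $[a], [b] \in \groupring$, where
\[
h \cdot ([a][b]) = h \cdot [a+b] = [h(a+b)] = [ha + hb] = [ha][hb] = (h \cdot [a])(h \cdot [b]).
\]
The third equality is exactly the distributive law in $\F_q$; this is the only substantive input in the entire argument.

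With the identity in hand, I would chain the two symmetry hypotheses: for every $h \in H$,
\[
fg = \bigl(\phi(h) \, h \cdot f\bigr)\bigl(\chi(h)\, h \cdot g\bigr) = \phi(h)\chi(h)\,(h \cdot f)(h \cdot g) = (\phi\chi)(h)\, h \cdot (fg),
\]
using $\phi$-symmetry of $f$, $\chi$-symmetry of $g$, and the intermediate identity above. Since the characters of $H$ form a group under pointwise multiplication, $\phi\chi$ is a character of $H$, and the displayed equation is exactly the defining relation \eqref{eq:chiSymDef} of $\phi\chi$-symmetry for $fg$.

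No step is genuinely difficult, so there is no principal obstacle; the whole proof rests on the distributive law in $\F_q$ together with the definitions. If one prefers to avoid stating the intermediate identity, one can argue directly at the level of coefficients: compute $(fg)_{hc} = \sum_{b \in \Fq} f_b g_{hc-b}$, reindex via $b = ha$ (legitimate because multiplication by $h \in \Fqu$ permutes $\Fq$) to get $\sum_a f_{ha} g_{h(c-a)}$, and then invoke the $\phi$- and $\chi$-symmetry coordinate conditions $f_{ha} = \phi(h) f_a$ and $g_{h(c-a)} = \chi(h) g_{c-a}$ to pull out the scalar $\phi(h)\chi(h)$, yielding $(fg)_{hc} = \phi(h)\chi(h)(fg)_c$.
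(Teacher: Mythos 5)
Your proof is correct. One small terminological slip in the framing: multiplication by $h$ is an \emph{additive} group automorphism of $\Fq$, not a ring endomorphism of $\Fq$ (it does not respect the field multiplication); it is the induced map on $\groupring$ that is a $\C$-algebra automorphism, because the group-ring construction turns automorphisms of the additive group $(\Fq,+)$ into algebra automorphisms of $\groupring$. Your computation only invokes the distributive law $h(a+b)=ha+hb$ and the invertibility of $h$, so the argument itself is sound. Compared with the paper: the paper's proof is precisely your alternative coefficient-level computation, folding the reindexing by $h$ and the two symmetry hypotheses into a single chain of equalities, whereas your primary argument first extracts the identity $h \cdot (fg) = (h \cdot f)(h \cdot g)$ as a standalone step. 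That extraction names the structural fact doing all the work (the $H$-action is by algebra automorphisms of $\groupring$) and reduces the rest to a one-line scalar manipulation, but the two proofs are the same direct verification organized differently.
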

\begin{proof}
Since $h\cdot (u v)=(h\cdot u)(h \cdot v)$ for every $h \in H$ and every $u,v \in \groupring$, we have $h\cdot \left((\phi\chi)(h)\right) (f g)=(h\cdot \phi(h) f)(h\cdot \chi(h) g)=f g$.
\end{proof}

We next show that a $\chi$-symmetric element of $\groupring$ has a constrained support.
\begin{lemma}\label{William}
Let $H \leq \Fqu$, let $\chi\colon H \to \Cu$ be a character, and let $f \in \groupring$ be $\chi$-symmetric.
Then $\supp(f)$ is $H$-closed and, if $\chi$ is nontrivial, $0\not\in \supp(f)$.
\end{lemma}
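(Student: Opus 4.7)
The plan is to read off both claims directly from the defining identity $f_{ha}=\chi(h)f_a$ for all $h\in H$ and $a\in\Fq$, which is the coordinate form of \eqref{eq:chiSymDef} given just above the statement.

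First I would prove $H$-closedness of $\supp(f)$. Fix $a\in\supp(f)$, so $f_a\neq 0$. For any $h\in H$, the identity $f_{ha}=\chi(h)f_a$ together with the fact that $\chi(h)\in\Cu$ (so $\chi(h)\neq 0$) forces $f_{ha}\neq 0$, i.e.\ $ha\in\supp(f)$. Hence the entire orbit $Ha$ is contained in $\supp(f)$, so $\supp(f)$ is a union of $H$-orbits.

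Next I would handle the second assertion. Suppose $\chi$ is nontrivial, so there exists $h_0\in H$ with $\chi(h_0)\neq 1$. Specializing the identity $f_{ha}=\chi(h)f_a$ to $h=h_0$ and $a=0$ yields $f_0=f_{h_0\cdot 0}=\chi(h_0)f_0$, that is, $(1-\chi(h_0))f_0=0$. Since $1-\chi(h_0)\neq 0$, we conclude $f_0=0$, so $0\notin\supp(f)$.

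There is no real obstacle here: both statements are immediate consequences of the coordinate form of $\chi$-symmetry and the fact that characters take values in $\Cu$. The only subtlety worth flagging is the uniform treatment of the orbit $\{0\}$: $H$-closedness still holds because $h\cdot 0=0$ for every $h$, and the nontriviality of $\chi$ is exactly what is needed to eliminate $0$ from the support in the second claim.
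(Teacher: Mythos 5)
Your proof is correct and follows essentially the same route as the paper's: both read off $H$-closedness from the identity $f_{ha}=\chi(h)f_a$ together with $\chi(h)\neq 0$, and both obtain $f_0=0$ by specializing to $a=0$ and a group element where $\chi$ is nontrivial.
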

\begin{proof}
Since $f_{h a}=\chi(h) f_a$ for all $a \in \Fq$ and $\chi(h) \neq 0$ for every $h \in H$, 
we see that $\supp(f)$ is $H$-closed.
If $\chi$ is nontrivial, then there is an $h \in H$ with $\chi(h)\not=1$.
Consequently, $f_0=f_{h 0}=\chi(h) f_0$ and hence $f_0=0$.
\end{proof}

We now consider some examples of $\chi$-symmetry that encompass several familiar types of functions (e.g., even and odd functions).  These generalize to arbitrary finite fields and express, in our group ring formalism, the definitions introduced in Section \ref{Ulrich} of the Introduction.
\begin{example}\label{Eustace}
If $H= \{1\}$ and $\chi$ is the trivial character,
then every element of $\groupring$ is $\chi$-symmetric.
\end{example}
\begin{example}[even group ring element]\label{Bridget}
Suppose that $q$ is odd, $H=\{1,-1\}$, and $\chi$ is the trivial character.
Then $f$ is $\chi$-symmetric if and only if $f_{-a}=f_a$ for every $a \in \Fq$, that is, $f$ is \emph{even}.
Lemma \ref{Augustus} implies that the product of two even group ring elements is even.
\end{example}
\begin{example}[odd group ring element]\label{Harold}
Suppose that $q$ is odd, $H=\{1,-1\}$, and $\chi$ is the character of $H$ with $\chi(-1)=-1$.
Then $f$ is $\chi$-symmetric if and only if $f_{-a}=-f_a$ for every $a \in \Fq$, that is, $f$ is \emph{odd}.
Moreover, Lemma \ref{William} ensures $f_0=0$ since $\chi$ is nontrivial.
The product of two odd group ring elements is even by Lemma \ref{Augustus}.
\end{example}

\subsection{Fourier characterization of $\chi$-symmetry}

We now show that $\chi$-symmetry has a dual characterization in the Fourier domain.

\begin{lemma}[Fourier characterization of $\chi$-symmetry]\label{Arthur}
Let $H$ be a subgroup of $\Fqu$ and $\chi\colon H \to \Cu$ be a character.
Then $f \in \groupring$ is $\chi$-symmetric if and only if 
\begin{equation}\label{eq:FourierChi}
\chi(h) \ft{f}(\psi_h)=\ft{f}(\psi)\quad \text{for all $h \in H$ and $\psi \in \achars$}.
\end{equation}
\end{lemma}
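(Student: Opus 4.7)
The plan is to translate the defining equation $\chi(h) h \cdot f = f$ across the Fourier isomorphism and observe that the $H$-action on $\C[\F_q]$ matches, in the Fourier domain, the $H$-action on additive characters by $\psi \mapsto \psi_h$.

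First I would establish the key intertwining formula
\begin{equation*}
\widehat{h \cdot f}(\psi) = \ft{f}(\psi_h) \quad \text{for all } h \in H \text{ and } \psi \in \achars.
\end{equation*}
This is a direct unwinding of definitions: using \eqref{eq:FourTranDef}, \eqref{eq:LinearExt}, and \eqref{eq:hdotf},
\begin{equation*}
\widehat{h \cdot f}(\psi) = \psi(h \cdot f) = \sum_{a \in \Fq} f_a \, \psi(h a) = \sum_{a \in \Fq} f_a \, \psi_h(a) = \ft{f}(\psi_h),
\end{equation*}
where the third equality is the definition \eqref{eq:psia} of $\psi_h$.

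Next I would apply the Fourier transform (which is a $\C$-vector space isomorphism $\groupring \to \ftcodomain$) to both sides of the defining equation \eqref{eq:chiSymDef}. By linearity and the intertwining formula just established, for a fixed $h \in H$, the equation $\chi(h) h \cdot f = f$ in $\groupring$ is equivalent to $\chi(h)\, \ft{f}(\psi_h) = \ft{f}(\psi)$ holding for every $\psi \in \achars$. Since the Fourier transform is an isomorphism, the former holds for every $h \in H$ if and only if the latter holds for every $h \in H$ and every $\psi \in \achars$, which is exactly \eqref{eq:FourierChi}.

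There is no serious obstacle here; the argument is essentially a two-line computation plus the Fourier isomorphism, and the only thing to be careful about is quantifier handling (the equivalence is between ``for all $h$'' statements on each side, one being an equality of group ring elements and the other an equality of functions on $\achars$).
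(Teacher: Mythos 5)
Your proof is correct and is essentially the same argument the paper gives: the key computation $\chi(h)\,\ft{f}(\psi_h) = \widehat{\chi(h)\, h\cdot f}(\psi)$ (which you package cleanly as an intertwining formula), combined with the invertibility of the Fourier transform for the converse direction. Your presentation—isolating $\widehat{h\cdot f}(\psi)=\ft{f}(\psi_h)$ first, then invoking the Fourier isomorphism—is a minor reorganization, not a different route.
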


\begin{proof}
If $f \in \groupring$, $\psi \in \achars$, and $h \in H$, then $\chi(h) \ft{f}(\psi_h) = \psi(h\cdot \chi(h) f)$.
If $f$ is $\chi$-symmetric, the last expression becomes $\psi(f)=\ft{f}(\psi)$, thus proving \eqref{eq:FourierChi}.
Conversely, if we assume \eqref{eq:FourierChi}, then the above calculation shows that $\psi(h\cdot \chi(h) f)=\ft{f}(\psi)=\psi(f)$ for every $\psi\in\achars$ and $h \in H$.
Since $h\cdot \chi(h) f$ and $f$ have the same Fourier transform for every $h \in H$, 
the invertibility of the Fourier transform implies that $h\cdot\chi(h) f = f$ for every $h \in H$, that is, $f$ is $\chi$-symmetric.
\end{proof}

We observe that $\chi$-symmetry imposes constraints on the support of the Fourier transform of an element of $\groupring$.
This is the Fourier analogue of Lemma \ref{William}.

\begin{corollary}\label{Rebecca}
Let $H \leq \Fqu$, let $\chi\colon H \to \Cu$ be a character, and let $f \in \groupring$ be $\chi$-symmetric.
Then $\supp(\ft{f})$ is $H$-closed and, if $\chi$ is nontrivial, $\ft{f}(\epsilon_0)=0$.
\end{corollary}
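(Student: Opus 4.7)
The plan is to mirror the proof of Lemma \ref{William} on the Fourier side, using the characterization provided by Lemma \ref{Arthur}. Since $f$ is $\chi$-symmetric, that lemma gives us $\chi(h)\ft{f}(\psi_h)=\ft{f}(\psi)$ for every $h\in H$ and $\psi\in\achars$. Because $\chi(h)\in\Cu$ is nonzero, this equation shows that $\ft{f}(\psi_h)=0$ if and only if $\ft{f}(\psi)=0$. In other words, membership of $\psi$ in $\supp(\ft{f})$ depends only on the $H$-orbit of $\psi$ in $\achars$, so $\supp(\ft{f})$ is a union of $H$-orbits, i.e., $H$-closed.

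For the second claim, I would single out the trivial character $\epsilon_0\in\achars$. From the definition \eqref{eq:psia}, for any $h\in H$ and any $x\in\Fq$ we have $(\epsilon_0)_h(x)=\epsilon_0(hx)=1=\epsilon_0(x)$, so $(\epsilon_0)_h=\epsilon_0$, meaning $\epsilon_0$ is a fixed point of the $H$-action on $\achars$. Applying \eqref{eq:FourierChi} at $\psi=\epsilon_0$ therefore yields $\chi(h)\ft{f}(\epsilon_0)=\ft{f}(\epsilon_0)$ for every $h\in H$. If $\chi$ is nontrivial, we may choose $h\in H$ with $\chi(h)\neq 1$, from which $(\chi(h)-1)\ft{f}(\epsilon_0)=0$ forces $\ft{f}(\epsilon_0)=0$.

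There is no real obstacle here; the only thing to be careful about is verifying that $\epsilon_0$ really is fixed by the $H$-action (which is immediate from $\epsilon_0\equiv 1$) and noting that the orbit structure of $\achars$ under $H$ is exactly what allows the first claim to be read off from \eqref{eq:FourierChi}. The argument is the exact Fourier dual of Lemma \ref{William}, with Lemma \ref{Arthur} playing the role of the defining relation \eqref{eq:chiSymDef} and the fixed orbit $\{\epsilon_0\}\subseteq\achars$ playing the role of the fixed orbit $\{0\}\subseteq\Fq$.
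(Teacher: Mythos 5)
Your proof is correct and follows essentially the same route as the paper's: invoke Lemma \ref{Arthur} to get $\chi(h)\ft{f}(\psi_h)=\ft{f}(\psi)$, deduce $H$-closure of $\supp(\ft{f})$ from $\chi(h)\neq 0$, and then specialize to the fixed point $\epsilon_0$ to conclude $\ft{f}(\epsilon_0)=0$ when $\chi$ is nontrivial. Your explicit check that $(\epsilon_0)_h=\epsilon_0$ is just a slightly more spelled-out version of the paper's observation that $\epsilon_{h\cdot 0}=\epsilon_0$.
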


\begin{proof}
Lemma \ref{Arthur} ensures that
$\chi(h) \ft{f}(\psi_h)=\ft{f}(\psi)$ for $h \in H$ and $\psi \in \achars$.
Since $\chi(h) \neq 0$, we see that $\supp(\ft{f})$ is $H$-closed.
If $\chi$ is nontrivial, then there is an $h \in H$ with $\chi(h)\not=1$.
Consequently, $\chi(h) \ft{f}(\epsilon_0)=\chi(h) \ft{f}(\epsilon_{h 0})= \ft{f}(\epsilon_0)$, and hence $\ft{f}(\epsilon_0)=0$.
\end{proof}

\subsection{Compressed Fourier transform and compressed Fourier matrix}\label{Eric}

We now define our generalization of the discrete cosine and sine transforms.
\begin{definition}[Compressed Fourier transform]\label{Nelson}
Suppose that $H \leq \Fqu$ and $\chi\colon H \to \Cu$ is a character.
Let $S$ be a set of representatives of the $H$-orbits of $\Fq$ (if $\chi$ is trivial) or of $\Fqu$ (if $\chi$ is nontrivial).
The map 
\begin{equation*}
f \mapsto \ft{f}\vert_{\epsilon_S}
\end{equation*}
from $\chifuncs$ to $\C^{\epsilon_S}$ is called the \emph{$(\chi,S)$-compressed Fourier transform}.
\end{definition}
We need bases for the domain $\chifuncs$ and the codomain $\C^{\epsilon_S}$ of our $(\chi,S)$-compressed Fourier transform.

First we consider the codomain $\C^{\epsilon_S}$.
If $\psi \in \achars$, then we define $\delta_\psi \in \ftcodomain$ by
\begin{equation}\label{Dennis}
\delta_\psi(\phi)=
\begin{cases}
1 & \text{if $\phi=\psi$}, \\
0 & \text{otherwise},
\end{cases}
\end{equation}
for $\phi \in \achars$.
Then $\{\delta_\psi: \psi\in\epsilon_S\}$ is the standard $\C$-basis of $\C^{\epsilon_S}$.

Now we make a suitable basis for the domain $\chifuncs$ of the $(\chi,S)$-compressed Fourier transform.
Let $H \leq \Fqu$ and let $\chi\colon H \to \Cu$ be a character.
For each $a \in \Fq$, define
\begin{equation}\label{Ursula}
u_{\chi,a}=\sum_{h \in H} \chi(h) [h a] \in \groupring.
\end{equation}
We record without proof some easy observations about the functions $u_{\chi,a}$.
\begin{lemma}\label{Imogene}
Let $H \leq \Fqu$, let $\chi\colon H \to \Cu$ be a character, and let
$a \in \Fq$.  Then
\begin{enumerate}
\item $u_{\chi,a}$ is $\chi$-symmetric;
\item $\supp (u_{\chi,a}) =  Ha$ if $\chi$ is trivial or $a \neq 0$;
\item $u_{\chi,0}=0$ if $\chi$ is nontrivial.
\end{enumerate}
\end{lemma}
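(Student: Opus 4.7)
The plan is to verify the three claims directly from the definition \eqref{Ursula}, using only the group structure of $H$ and orthogonality of characters. Since each part is short, I expect no real obstacle; the main thing to get right is the re-indexing in (i).

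For part (i), I would compute $\chi(h') (h' \cdot u_{\chi,a})$ for an arbitrary $h' \in H$ and show that it equals $u_{\chi,a}$. Applying \eqref{eq:hdotf} to the definition gives
\begin{equation*}
\chi(h')\, h' \cdot u_{\chi,a} = \chi(h') \sum_{h \in H} \chi(h)[h'ha] = \sum_{h \in H} \chi(h'h)[h'h \cdot a],
\end{equation*}
and then I would substitute $g = h'h$. Because $H$ is a group, $g$ ranges over all of $H$ as $h$ does, so the sum becomes $\sum_{g \in H}\chi(g)[ga] = u_{\chi,a}$, which is the defining condition \eqref{eq:chiSymDef} of $\chi$-symmetry.

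For part (ii), I would argue that the elements $\{ha : h \in H\}$ appearing as group-ring generators in \eqref{Ursula} are pairwise distinct whenever $a \neq 0$: if $h_1 a = h_2 a$ and $a \neq 0$, then $h_1 = h_2$ by cancellation in $\Fqu$. (If $\chi$ is trivial and $a = 0$, then every summand equals $[0]$ and collapses to $|H|[0]$, with support $\{0\} = H \cdot 0$, as required.) Since the coefficients $\chi(h)$ all lie in $\Cu$ and are therefore nonzero, no cancellation occurs in the case $a \neq 0$, so the support is exactly $Ha$.

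For part (iii), when $a = 0$ and $\chi$ is nontrivial, every summand is supported at $0$, giving
\begin{equation*}
u_{\chi,0} = \Bigl(\sum_{h \in H}\chi(h)\Bigr)[0].
\end{equation*}
By the standard orthogonality relation for characters of a finite abelian group, the sum of a nontrivial character over its domain vanishes, so $u_{\chi,0} = 0$. This completes all three parts.
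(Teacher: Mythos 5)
Your proof is correct and follows essentially the same route as the paper's: a direct reindexing computation for (i), cancellation in $\Fqu$ together with nonvanishing of the coefficients $\chi(h)$ for (ii), and the orthogonality relation $\sum_{h\in H}\chi(h)=0$ for nontrivial $\chi$ in (iii). Nothing to add.
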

From these we obtain a basis of $\chifuncs$ and a proof that compressed Fourier transforms are $\C$-linear isomorphisms.
\begin{proposition}\label{Boris}
Let $H \leq \Fqu$ and let $\chi\colon H \to \Cu$ be a character.
Let each of $R,S$ be a set of representatives of the $H$-orbits of $\Fq$ (if $\chi$ is trivial) or of $\Fqu$ (if $\chi$ is nontrivial).
Then $\{u_{\chi,r}: r \in R\}$ is a $\C$-basis of $\chifuncs$ (which is $|R|$-dimensional) and the $(\chi,S)$-compressed Fourier transform, $f \mapsto \ft{f}\vert_{\epsilon_S}$, from $\chifuncs$ to $\C^{\epsilon_S}$ is an isomorphism of $\C$-vector spaces.
\end{proposition}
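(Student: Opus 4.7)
The plan is to combine the two earlier results: Corollary \ref{Imogene} gives linear independence of $\{u_{\chi,r}: r\in R\}$ in $\cF$, and Corollary \ref{Gilbert} gives injectivity of the restriction map $f \mapsto \ft{f}\vert_{\epsilon_S}$. Both bounds apply to $\dim_{\C}\cF$, and they squeeze it between $|R|$ and $|S|$, which coincide.

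More concretely, I would proceed as follows. First, observe that whether $\chi$ is trivial or nontrivial, both $R$ and $S$ are sets of orbit representatives for the same collection of $H$-orbits (the $H$-orbits of $\Fq$ when $\chi$ is trivial, of $\Fqu$ when $\chi$ is nontrivial), so $|R|=|S|$. Corollary \ref{Imogene} shows that $\{u_{\chi,r}:r\in R\} \subseteq \cF$ is $\C$-linearly independent, hence
\begin{equation*}
\dim_{\C}\cF \geq |R|.
\end{equation*}
On the other hand, the map $\Phi \colon \cF \to \C^{\epsilon_S}$ sending $f \mapsto \ft{f}\vert_{\epsilon_S}$ is $\C$-linear (the Fourier transform and restriction are both $\C$-linear), and Corollary \ref{Gilbert} shows it is injective. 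Therefore
\begin{equation*}
\dim_{\C}\cF \leq \dim_{\C}\C^{\epsilon_S} = |S| = |R|.
\end{equation*}

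Combining the two inequalities gives $\dim_{\C}\cF = |R|$. A linearly independent set of size equal to the dimension is a basis, so $\{u_{\chi,r}: r\in R\}$ is a $\C$-basis of $\cF$. Moreover, $\Phi$ is an injective linear map between two $\C$-vector spaces of the same finite dimension $|R|=|S|$, hence a $\C$-vector space isomorphism, which completes the proof.

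I do not anticipate a real obstacle, since the substantive content (linear independence of the $u_{\chi,r}$ and injectivity of the Fourier restriction) has already been established. The only subtlety worth remarking on is the uniform treatment of the trivial and nontrivial cases of $\chi$: one should note explicitly that in either case $R$ and $S$ parametrize the same orbit set, so the two dimension bounds actually coincide and the squeeze argument closes cleanly.
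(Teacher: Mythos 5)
Your proof is correct and follows essentially the same squeeze argument as the paper: linear independence from Corollary~\ref{Imogene} gives the lower bound on $\dim_\C\cF$, injectivity from Corollary~\ref{Gilbert} gives the upper bound, and equality $|R|=|S|$ closes the gap, forcing the independent set to be a basis and the restriction map to be an isomorphism.
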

\begin{proof}
The $(\chi,S)$-compressed Fourier transform is the composition of the Fourier transform and the projection $\pi$ from $\ftcodomain$ to $\C^{\epsilon_S}$:
\[
\chifuncs \xlongrightarrow{\widehat{}} \ftcodomain \xlongrightarrow{\pi} \C^{\epsilon_S}.
\]
Both maps are $\C$-linear.  Furthermore, if $f$ is in the kernel of the composition, then $\supp(\ft{f})\cap \epsilon_S=\varnothing$.
Since Corollary \ref{Rebecca} shows that $\supp(\ft{f})$ is $H$-closed (and also lacks $\epsilon_0$ if $\chi$ is nontrivial), the support of $\ft{f}$ is disjoint from $\epsilon_{H S}=\epsilon_{\Fq}$ (if $\chi$ is trivial) or is disjoint from $\epsilon_{H S}\cup\{\epsilon_0\}=\epsilon_{\Fq}$ (if $\chi$ is nontrivial).  That is, $\ft{f}$ is identically zero, so the $(\chi,S)$-compressed Fourier transform is injective.

In view of Lemma \ref{Imogene}, each $u_{\chi,r}$ with $r \in R$ is $\chi$-symmetric, so consider the composition of the following inclusion map and the compressed Fourier transform (which we denote by $\widetilde{\enspace}$):
\[
\Span_\C\{u_{\chi,r}: r \in R\} \hookrightarrow \chifuncs \xlongrightarrow{\widetilde{}} \C^{\epsilon_S}.
\]
Both maps are injective and $\C$-linear, so the dimensions of the spaces do not decrease as we proceed from left to right.
However, since Lemma \ref{Imogene} shows that the elements of $\{u_{\chi,r}: r \in R\}$ have nonempty pairwise disjoint supports, they are $|R|$ linearly independent group ring elements and $\dim \Span_\C\{u_{\chi,r}: r \in R\} = |R|$.
Since $\dim \C^{\epsilon_S} = |\epsilon_S|=|S|=|R|$, all three spaces have dimension $|R|$ and hence both maps are $\C$-linear isomorphisms.
Since $\{u_{\chi,r}: r \in R\}$ is linearly independent and spans $\chifuncs$, it is a basis of $\chifuncs$.
\end{proof}
The following corollary allows us to track the dimension of spaces of $\chi$-symmetric group ring elements based on the intersection of the supports of these elements with a set of $H$-orbit representatives.
\begin{corollary}\label{Gilda}
Let $H \leq \Fqu$, let $\chi\colon H \to \Cu$ be a character, and let $R$ be a set of representatives of the $H$-orbits of $\Fq$ (if $\chi$ is trivial) or of $\Fqu$ (if $\chi$ is nontrivial).
Let $Q\subseteq R$, and let $V=\{f \in \chifuncs: \supp(f)\cap R \subseteq Q\}$.
Then $V$ is a $|Q|$-dimensional $\C$-vector subspace of $\chifuncs$, and $\{u_{\chi,r}: r \in Q\}$ is a $\C$-basis of $V$.
\end{corollary}
\begin{proof}
If $f,g \in V$ and $a,b \in \C$, then $\supp(a f+ b g) \subseteq \supp(f)\cup\supp(g)$, so that $\supp(a f+b g) \cap R \subseteq (\supp(f) \cap R)\cup(\supp(g)\cap R) \subseteq Q$, and so $a f + b g \in V$.
Since $V$ contains the zero function, this makes $V$ a $\C$-subspace of $\chifuncs$.
Let $A=\{u_{\chi,r}: r \in R\}$ and $B=\{u_{\chi,r}: r \in Q\}$.
Proposition \ref{Boris} shows that $A$ is a $\C$-basis of $\chifuncs$ with $|R|$ group ring elements, so $B$ is a $\C$-linearly independent set with $|Q|$ group ring elements.
If $f \in \chifuncs$ and we write $f$ in terms of basis $A$ as $f=\sum_{r \in R} a_r u_{\chi,r}$ (where each $a_r \in \C$), then $\supp(f)\cap R = \{r \in R: a_r\not=0\}$.
Therefore, $f \in V$ if and only if $a_r=0$ for every $r \in R\setminus Q$.
Thus, $\Span_\C(B)=V$, and so $B$ is a $\C$-basis of $V$.
\end{proof}
Now that we have suitable bases for the domain and codomain of our compressed Fourier transform, we can define our compressed Fourier matrices.
\begin{definition}[Compressed Fourier matrix]\label{Nancy}
Suppose that $H \leq \Fqu$ and $\chi\colon H \to \Cu$ is a character.
Let each of $R$ and $S$ be a set of representatives of the $H$-orbits of $\Fq$ (if $\chi$ is trivial) or of $\Fqu$ (if $\chi$ is nontrivial).
For each $r \in R$, let $u_{\chi,r}$ be as defined in \eqref{Ursula}.
An \emph{$(\chi,R,S)$-compressed Fourier matrix} is a matrix whose rows and columns are indexed respectively by the sets $R$ and $S$ (endowed with some orderings), and whose $(r,s)$-entry is $\epsilon_s(u_{\chi,r})$.
\end{definition}
This \emph{$(\chi,R,S)$-compressed Fourier matrix} is a matrix representation (with the matrix acting on row vectors on its left) of the $(\chi,S)$-compressed Fourier transform $f\mapsto \ft{f}\vert_{\epsilon_S}$ from $\chifuncs$ to $\C^{\epsilon_S}$ with respect to the bases $\{u_{\chi,r}: r \in R\}$ (for $\chifuncs$) and $\{\delta_{\epsilon_s}: s \in S\}$ (for $\C^{\epsilon_S}$).
\begin{example}[Discrete Fourier transform matrix]
Let $p$ be a prime, let $H = \{1\}$ be the trivial subgroup of $\Fpu$, and let $\chi\colon H \to \Cu$ be the trivial character.
Then $R=\Fp$ is a set of $H$-orbit representatives of $\Fp$.
Every element of $\pgroupring$ is $\chi$-symmetric (see Example \ref{Eustace}) and the elements $u_{\chi,r}=[r]$ for $r \in R$ form a basis of $\groupring$.
Then for $r,s \in \Fp$, the corresponding $(\chi,R,R)$-compressed Fourier matrix has in its $r$th row and $s$th column the entry $\epsilon_s([r]) = \exp(2\pi i r s/p)$.
If we scale each entry by $1/\sqrt{p}$ and arrange the rows in order $r=0,1,\ldots,p-1$ and the columns in order $s=0,p-1,p-2,\ldots,1$, then we obtain the discrete Fourier transform matrix \eqref{Vincent} of order $p$.
Thus, the discrete Fourier transform matrix is the simplest example (up to scaling) of an $(\chi,R,R)$-compressed Fourier matrix.
\end{example}

\begin{example}[Discrete cosine transform matrix]\label{Edith}
Let $p$ be an odd prime, let $H= \{-1,1\} \leq \Fpu$, and let $\chi\colon H \to \Cu$ be the trivial character.
Let $R=\{0,1,2,\ldots,(p-1)/2\}$, which is a set of $H$-orbit representatives of $\Fp$.
Then the $\chi$-symmetric elements of $\pgroupring$ are the even elements (see Example \ref{Bridget}), and the elements $u_{\chi,r}=[r]+[-r]$ for $r \in R$ form a basis of the space of even elements by Proposition \ref{Boris}.
For $r,s \in R$, a $(\chi,R,R)$-compressed Fourier matrix has in its $r$th row and $s$th column the entry $\epsilon_s([r]+[-r])= 2 \cos(2\pi r s/p)$.
If we scale rows with $r\not=0$ by $1/\sqrt{p}$, and scale the row with $r=0$ by $1/\sqrt{2 p}$, and scale the column with $s=0$ by $1/\sqrt{2}$, we obtain the matrix $C_p$ in \eqref{Caesar}.
Thus, the discrete cosine transform matrix has the nonvanishing minors property if and only if this $(\chi,R,R)$-compressed Fourier matrix has it.
\end{example}

\begin{example}[Discrete sine transform matrix]\label{Edward}
Let $p$ be an odd prime, let $H=\{-1,1\} \leq \Fpu$, and let $\chi\colon H \to \Cu$ be the character with $\chi(-1)=-1$.
Let $R=\{1,2,\ldots,(p-1)/2\}$, which is a set of $H$-orbit representatives of $\Fpu$.
Then the $\chi$-symmetric elements of $\pgroupring$ are the odd elements (see Example \ref{Harold}), and the elements $u_{\chi,r}=[r]-[-r]$ for $r \in R$ form a basis of the space of odd elements by Proposition \ref{Boris}.
For $r,s \in R$, a $(\chi,R,R)$-compressed Fourier matrix has in its $r$th row and $s$th column the entry $\epsilon_s([r]-[-r]) = 2 i \sin(2\pi r s/p)$.
If we scale each row by $-i/\sqrt{p}$, we obtain the matrix $S_p$ from \eqref{Simon}.
So the discrete sine transform matrix has the nonvanishing minors property if and only if this $(\chi,R,R)$-compressed Fourier matrix has it.
\end{example}
We now show that we can always arrange for our compressed Fourier matrices to be symmetric.
\begin{lemma}\label{Sybil}
A $(\chi,R,R)$-compressed Fourier matrix is symmetric if we use the same ordering of $R$ to index the rows and columns.
\end{lemma}
\begin{proof}
The $(r,s)$-entry of our matrix is $\epsilon_s(u_{\chi,r})= \sum_{h \in H} \chi(h) \epsilon(h r s)$, which depends only on the product $r s$ of the indices.
\end{proof}

\section{The strong uncertainty property and the nonvanishing minors property}\label{Abigail}

In this section we define the strong uncertainty property for $\chi$-symmetric functions and show some equivalent formulations of it.
We then prove that whenever the strong uncertainty property holds, the bound it furnishes is sharp.
We conclude with a proof that $\chi$-symmetric functions enjoy the strong uncertainty property if and only if $(\chi,R,S)$-compressed Fourier matrices have the nonvanishing minors property.

\subsection{The strong uncertainty property}

\begin{definition}[Strong uncertainty property]\label{Gertrude}
Let $H \leq \Fqu$ and let $\chi \colon H \to \Cu$ be a character.
We say that the \emph{the Fourier transform of the $\chi$-symmetric elements of $\groupring$ has the strong uncertainty property} (or, more briefly that \emph{$(\Fq,\chi)$ has the strong uncertainty property}) to mean that for every nonzero $\chi$-symmetric element $f\in \groupring$, the following holds:
\begin{enumerate}
\item If $\chi$ is nontrivial, then
\[
|\supp(f)|+|\supp(\ft{f})| \geq q+|H|-1.
\]
\item If $\chi$ is trivial, then
\[
|\supp(f)|+|\supp(\ft{f})| \geq
\begin{cases}
q+2|H|-1 & \text{if $f(0)=0$ and $\ft{f}(\epsilon_0)=0$},\\
q+|H| & \text{if $f(0)=0$ or $\ft{f}(\epsilon_0)=0$},\\
q+1 & \text{otherwise}.
\end{cases}
\]
\end{enumerate}
\end{definition}
We now show some equivalent formulations of the strong uncertainty property in Proposition \ref{Francis} after some preparatory results.
\begin{lemma}\label{Paul}
Suppose that $H \leq \Fqu$ and $R$ is a set of representatives of $H$-orbits of $\Fq$.
\begin{itemize}
\item If $A$ is an $H$-closed subset of $\Fq$, then
\begin{equation*}
|A| = \begin{cases}
|H|\, |A\cap R| & \text{if $0\not\in A$}, \\[2pt]
|H|\, |A\cap R|-(|H|-1) & \text{if $0 \in A$}.
\end{cases}
\end{equation*}
\item If $\Psi$ is an $H$-closed subset of $\achars$, then
\begin{equation*}
|\Psi| = \begin{cases}
|H|\, |\Psi\cap \epsilon_R| & \text{if $\epsilon_0\not\in \Psi$}, \\[2pt]
|H|\, |\Psi\cap \epsilon_R|-(|H|-1) & \text{if $\epsilon_0 \in \Psi$}.
\end{cases}
\end{equation*}
\end{itemize}
\end{lemma}
\begin{proof}
The first result follows from the fact that $A$ is a union of $H$-orbits, and the $H$-orbits consist of the singleton set $\{0\}$ and the cosets of $H$ (each of size $|H|$) that make up the quotient group $\Fqu/H$.
The second result follows by same idea applied to $H$-orbits in $\achars$.
\end{proof}
\begin{corollary}\label{Justine}
Let $H \leq \Fqu$.
\begin{enumerate}
\item\label{Herman} If each of $R$ and $S$ is a complete set of representatives of $H$-orbits in $\Fqu$, then $|R|=|S|$.  If $A$ is an $H$-closed subset of $\Fqu$ and $B$ is an $H$-closed subset of $\epsilon_{\Fqu}$, then
\[
|A\cap R|+|B\cap \epsilon_S|-(|R|+1)=\frac{|A|+|B|-(q+|H|-1)}{|H|}. 
\]
\item\label{Jack} If each of $R$ and $S$ is a complete set of representatives of $H$-orbits in $\Fq$, then $|R|=|S|$.  If $A$ is an $H$-closed subset of $\Fq$ and $B$ is an $H$-closed subset of $\epsilon_{\Fq}$, then
\[|A\cap R|+|B\cap \epsilon_S|-(|R|+1)\]
equals
\[
\begin{cases}
\dfrac{|A|+|B|-(q+2|H|-1)}{|H|} & \text{if $0\not\in A$ and $\epsilon_0 \not\in B$,}\\[10pt]
\dfrac{|A|+|B|-(q+|H|)}{|H|} & \text{if $0\not\in A$ and $\epsilon_0 \in B$,}\\[10pt]
\dfrac{|A|+|B|-(q+|H|)}{|H|} & \text{if $0\in A$ and $\epsilon_0 \not\in B$,}\\[10pt]
\dfrac{|A|+|B|-(q+1)}{|H|} & \text{if $0\in A$ and $\epsilon_0 \in B$.}
\end{cases}
\]
\end{enumerate}
\end{corollary}
\begin{proof}
Note that $\Fqu$, $\epsilon_{\Fqu}$, $\Fq$, and $\epsilon_{\Fq}$ are all $H$-closed, so Lemma \ref{Paul} is applicable to these sets in addition to $A$ and $B$, and then the formulae follow easily in all cases.
\end{proof} 

\begin{proposition}\label{Francis}
Let $H \leq \Fqu$ and let $\chi \colon H \to \Cu$ be a character.
Let each of $R,S$ be a set of representatives of the $H$-orbits of $\Fq$ (if $\chi$ is trivial) or of $\Fqu$ (if $\chi$ is nontrivial).
Then the following are equivalent:
\begin{enumerate}
\item\label{Aaron} The pair $(\Fq,\chi)$ has the strong uncertainty property.
\item\label{Beverly} For every nonzero $\chi$-symmetric function $f \colon \Fq \to \C$, we have
\[
|\supp(f) \cap R| + |\supp(\ft{f})\cap \epsilon_S| > |R|.
\]
\item\label{Carlos} For every $Q \subseteq R$ and $T \subseteq S$ with $|Q|=|T|$, the map $\phi: \{f \in \chifuncs: \supp(f)\cap R \subseteq Q\} \to \C^{\epsilon_T}$ with $\phi(f)=\ft{f}\vert_{\epsilon_T}$ is a $\C$-linear isomorphism.
\end{enumerate}
\end{proposition}
\begin{proof}
To see that \ref{Aaron} is equivalent to \ref{Beverly}, consider a nonzero $\chi$-symmetric function $f$ and let $A=\supp(f)$ and $B=\supp(\ft{f})$.
Lemma \ref{William} shows that $A$ is an $H$-closed subset of $\Fqu$ (if $\chi$ is nontrivial) or $\Fq$ (if $\chi$ is trivial), and Corollary \ref{Rebecca} shows that $B$ is an $H$-closed subset of $\epsilon_{\Fqu}$ (if $\chi$ is nontrivial) or $\epsilon_{\Fq}$ (if $\chi$ is trivial), so we may apply Corollary \ref{Justine} to $R$, $S$, $A$, and $B$.
When one goes through each of the four cases described in Definition \ref{Gertrude} of the strong uncertainty property, Corollary \ref{Justine} shows that the inequality from that case is always equivalent to the inequality $|\supp(f) \cap R| + |\supp(\ft{f})\cap \epsilon_S|-(|R|+1) \geq 0$.

Suppose that \ref{Beverly} holds.  To show \ref{Carlos}, let $V$ denote the domain of $\phi$ and note that Corollary \ref{Gilda} shows that $V$ is a $|Q|$-dimensional $\C$-subspace of $\chifuncs$.
The codomain $\C^{\epsilon_T}$ is a $\C$-vector space of dimension $|\epsilon_T|=|T|=|Q|$.
The Fourier transform and the projection map from $\ftcodomain$ to $\C^{\epsilon_T}$ are both $\C$-linear maps, so $\phi$ is a $\C$-linear map between two vector spaces of equal dimension.
So it remains to show that $\phi$ is injective.
Let $g \in \ker\phi$.
Then $\supp(g)\cap R \subseteq Q$ and $\supp(\ft{g})\cap \epsilon_T=\varnothing$, so that $\supp(\ft{g})\cap \epsilon_S\subseteq\epsilon_S\setminus\epsilon_T$.
Thus, $|\supp(g)\cap R|+|\supp(\ft{g})\cap\epsilon_S|\leq |Q|+|\epsilon_S|-|\epsilon_T|=|Q|+|S|-|T|=|S|=|R|$.
So by \ref{Beverly}, we know that $g=0$.

Conversely, suppose that \ref{Carlos} holds.
To show \ref{Beverly}, suppose that $g$ is a $\chi$-symmetric function with
\begin{equation}\label{Henry}
|\supp(g)\cap R|+|\supp(\ft{g})\cap\epsilon_S| \leq |R|,
\end{equation}
and we want to show that this forces $g=0$.
Let $Q=\supp(g)\cap R$.  Since $|R|=|S|=|\epsilon_S|$, we can use \eqref{Henry} to obtain a set $T \subseteq S$ such that
\begin{equation*}
|T|=|Q|\qquad\text{and}\qquad \epsilon_T \cap \supp(\ft{g})=\varnothing.
\end{equation*}
Our assumption \ref{Carlos} gives us a $\C$-linear isomorphism $\phi$ whose domain $\{f \in \chifuncs: \supp(f)\cap R \subseteq Q\}$ contains $g$, and which maps $g$ to $0$, thus proving that $g=0$.
\end{proof}

\subsection{Sharpness of strong uncertainty}

In this section, we show that the lower bounds in Definition \ref{Gertrude} are best possible.
We first require a technical lemma.
\begin{lemma}\label{Oliver}
Let $K$ be a field, let $S$ be a set, let $V$ be a $K$-vector subspace of $K^S$, and let $n$ be a positive integer with $n < |K|$.  Then the following are equivalent.
\begin{enumerate}
\item For every $T \subseteq S$ with $|T|=n$, there is a $v \in V$ such that $\supp(v)=T$.
\item For every $T \subseteq S$ with $T$ finite and $|T|\geq n$, there is a $v \in V$ such that $\supp(v)=T$.
\end{enumerate}
\end{lemma}
\begin{proof}
The only nontrivial work is proving that the former statement implies the latter statement with $|T| >n$.
So assume that the former statement holds and that $T$ is a finite subset of $S$ with $|T| > n$.
Let $T_1,T_2,\ldots,T_k$ be a collection of $n$-element subsets of $S$ whose union is $T$ and that are all pairwise disjoint, except for possibly $T_1$ and $T_2$, whose intersection can be made to have fewer than $n$ elements.
Let $v^{(1)},v^{(2)},\ldots,v^{(k)}$ be elements of $V$ with $\supp(v^{(j)})=T_j$ for each $j$.
Let $\lambda$ be a nonzero element of $K$ such that $\lambda \neq-v^{(2)}(s)/v^{(1)}(s)$ for every $s \in T_1 \cap T_2$.
Since $|K^{\times}| > n-1$ and $|T_1\cap T_2| \leq n-1$, such a $\lambda$ exists.
Then $v=\lambda v^{(1)}+v^{(2)}+\cdots+v^{(k)}$ has $T$ as its support since the choice of $\lambda$ has given it nonzero $v(s)$ for $s\in T_1 \cap T_2$ and for any $t \in T \setminus (T_1 \cap T_2)$, nonvanishing of $v(t)$ is guaranteed because one and only one $v^{(j)}$ has a nonzero value at $t$.
\end{proof}
Now we prove that the bound in Proposition \ref{Francis}\ref{Beverly} (which is an equivalent characterization of the strong uncertainty property) is best possible.
\begin{proposition}\label{Reginald}
Let $H \leq \Fqu$, let $\chi \colon H \to \Cu$ be a character, and suppose that $(\Fq,\chi)$ has the strong uncertainty property.
Let each of $R$ and $S$ be a set of representatives of the $H$-orbits of $\Fq$ (if $\chi$ is trivial) or of $\Fqu$ (if $\chi$ is nontrivial).
Let $Q \subseteq R$ and $T \subseteq S$ with $|Q|+|T|>|R|$.
Then there is a $\chi$-symmetric element $f$ of $\groupring$ with $\supp(f)\cap R=Q$ and $\supp(\ft{f})\cap \epsilon_S=\epsilon_T$.
\end{proposition}
\begin{proof}
To each $\chi$-symmetric $f$ in $\groupring$, associate the vector in $\C^{R \cup \epsilon_S}$ whose components are $(f_r)_{r \in R}$ and $(\ft{f}_{\epsilon_s})_{s \in S}$. The set of all such vectors is a $\C$-vector subspace $V$ of $\C^{R\cup \epsilon_S}$ since the set of $\chi$-symmetric elements is a $\C$-vector subspace of $\groupring$ and the Fourier transform is a linear transformation.
  
We want to find an element of $V$ whose support is $Q \cup \epsilon_T$.  
Lemma \ref{Oliver} permits us to assume that $|Q|+|\epsilon_T|=|R|+1$, i.e., $|Q|+|T|=|R|+1$.
Pick $t \in T$ and let 
\begin{equation*}
Y=(S\setminus  T) \cup\{t\},
\end{equation*}
so that $|Y|=|S|-|T|+1$.
Since $|R|=|S|$, this means that $|Y|=|R|-|T|+1=|Q|$.
Consider the linear map $\phi\colon \{g \in \chifuncs: \supp(f)\cap R \subseteq Q\} \to \C^{\epsilon_Y}$ defined by $\phi(g)=\ft{g}\vert_{\epsilon_Y}$.
Proposition \ref{Francis}\ref{Carlos} shows that $\phi$ is a $\C$-linear isomorphism, so there is some $f \in \chifuncs$ with $\supp(f)\cap R \subseteq Q$ with $\phi(f)=\delta_{\epsilon_t}$.
Thus, $f$ is a nonzero $\chi$-symmetric function with $\supp(\ft{f})\cap \epsilon_Y=\{\epsilon_t\}$.
Therefore, $\supp(\ft{f})\cap \epsilon_S \subseteq \epsilon_T$.
The containments $\supp(f)\cap R \subseteq Q$ and $\supp(\ft{f})\cap\epsilon_S \subseteq \epsilon_T$ must be equalities since otherwise 
\begin{equation*}
|\supp(f)\cap R|+|\supp(\ft{f})\cap \epsilon_S| < |Q|+|\epsilon_T| = |Q|+|T|=|R|+1,
\end{equation*}
which would violate the inequality in Proposition \ref{Francis}\ref{Beverly}.
\end{proof}
Proposition \ref{Reginald} implies that the bounds in Definition \ref{Gertrude} are best possible.
\begin{theorem}\label{Samantha}
Let $H \leq \Fqu$, let $\chi \colon H \to \Cu$ be a character, and suppose that $(\Fq,\chi)$ has the strong uncertainty property.
\begin{enumerate}
\item If $\chi$ is nontrivial, then for any $H$-closed subsets $A$ and $B$ of $\Fqu$ with  
\begin{equation*}
|A|+|B| \geq q+|H|-1,
\end{equation*}
there is a $\chi$-symmetric $f \in \groupring$ with $\supp(f)=A$ and $\supp(\ft{f})=\epsilon_B$.
\item If $\chi$ is trivial and $A$ and $B$ are $H$-closed subsets of $\Fq$ with
\begin{equation*}
|A|+|B| \geq
\begin{cases}
q+2|H|-1 & \text{if $0$ is in neither $A$ nor $B$}, \\
q+|H| & \text{if $0$ is in precisely one of $A$ or $B$}, \\
q+1 & \text{if $0$ is in both $A$ and $B$},
\end{cases}
\end{equation*}
then there is a $\chi$-symmetric $f \in \groupring$ with 
$\supp(f)=A$ and $\supp(\ft{f})=\epsilon_B$.
\end{enumerate}
\end{theorem}
\begin{proof}
Let each of $R$ and $S$ be a set of representatives of $H$-orbits of $\Fqu$ (if $\chi$ is nontrivial) or of $\Fq$ (if $\chi$ is trivial).
So $\epsilon_S$ is a complete set of representatives of $H$-orbits of $\epsilon_{\Fqu}$ (if $\chi$ is nontrivial) or of $\epsilon_{\Fq}$ (if $\chi$ is trivial).
Let $Q=A\cap R$ and $T=B\cap S$ (so $\epsilon_T=\epsilon_B\cap \epsilon_S$).
If one goes through each of the four cases in the statement of this theorem, Corollary \ref{Justine} shows that the stated inequality is equivalent to $|Q|+|T| \geq |R|+1$, so we may invoke Proposition \ref{Reginald} to obtain a $\chi$-symmetric function $f$ with $\supp(f)\cap R=Q$ and $\supp(\ft{f})\cap \epsilon_S=\epsilon_T$.
Lemma \ref{William} shows that $\supp(f)$ is an $H$-closed subset of $\Fqu$ (if $\chi$ is nontrivial) or $\Fq$ (if $\chi$ is trivial), and since $R$ is a complete set of $H$-orbit representatives of $\Fqu$ (if $\chi$ is nontrivial) or $\Fq$ (if $\chi$ is trivial), this shows that $\supp(f)=H(\supp(f)\cap R)=H Q=A$.
Likewise, Corollary \ref{Rebecca} shows that $\supp(\ft{f})$ is an $H$-closed subset of $\epsilon_{\Fqu}$ (if $\chi$ is nontrivial) or $\epsilon_{\Fq}$ (if $\chi$ is trivial), and since $\epsilon_S$ is a complete set of $H$-orbit representatives of $\Fqu$ (if $\chi$ is nontrivial) or $\Fq$ (if $\chi$ is trivial), this shows that $\supp(\ft{f})=H(\supp(\ft{f})\cap \epsilon_S)=H \epsilon_T = \epsilon_{H T} = \epsilon_B$.
\end{proof}

\subsection{Strong uncertainty and nonvanishing minors}

Now we show the connection between the strong uncertainty property and nonvanishing minors of compressed Fourier matrices.
\begin{proposition}\label{Melanie}
Let $H \leq \Fqu$ and let $\chi\colon H \to \C$ be a character of $H$.
Let $R, S$ be sets of representatives of the $H$-orbits of $\Fq$ (if $\chi$ is trivial) or of $\Fqu$ (if $\chi$ is nontrivial), and let $M$ be a $(\chi,R,S)$-compressed Fourier matrix.
Then $(\Fq,\chi)$ has the strong uncertainty property if and only if $M$ has the nonvanishing minors property.
\end{proposition}
\begin{proof}
Suppose that $(\Fq,\chi)$ has the strong uncertainty property.
Consider a square submatrix of $M$ whose set of row indices is $Q$ and whose set of column indices is $T$ (so $Q \subseteq R$ and $T \subseteq S$ with $|Q|=|T|$).
We want to show that $M$ is nonsingular.
For each $r\in R$, let $u_{\chi,r}$ be as defined in \eqref{Ursula}.
Our submatrix represents the map $\phi\colon \Span_\C \{u_{\chi,r}: r \in Q\} \to \C^{\epsilon_T}$ with $\phi(f)=\ft{f}\vert_{\epsilon_T}$.
Corollary \ref{Gilda} shows that the domain of $\phi$ is $\{f\in \chifuncs: \supp(f)\cap R \subseteq Q\}$, and thus $\phi$ is bijective by Proposition \ref{Francis}, so that our submatrix is nonsingular.

Now suppose that $M$ has the nonvanishing minors property.
To prove that $(\Fq,\chi)$ has the strong uncertainty property, we use the equivalent characterization of this property from Proposition \ref{Francis}\ref{Carlos}.
So assume that $Q\subseteq R$ and $T \subseteq S$ with $|Q|=|T|$ and define the map $\phi: \{f \in \chifuncs: \supp(f)\cap R \subseteq Q\} \to \C^{\epsilon_T}$ with $\phi(f)=\ft{f}\vert_{\epsilon_T}$.
In view of Proposition \ref{Francis}, it suffices to prove that this map is a $\C$-linear isomorphism.
For each $r\in R$, let $u_{\chi,r}$ be as defined in \eqref{Ursula}.
Corollary \ref{Gilda} shows that the domain of $\phi$ is the $|Q|$-dimensional $\C$-vector space with $\C$-basis $\{u_{\chi,r}: r \in Q\}$.
The codomain $\C^{\epsilon_T}$ of $\phi$ is a $\C$-vector space with basis $\{\delta_{\epsilon_t}: t \in T\}$ of dimension $|\epsilon_T|=|T|=|Q|$.
Both the Fourier transform and the projection from $\ftcodomain$ to $\C^{\epsilon_T}$ are $\C$-linear maps, so $\phi$ is a $\C$-linear map.
The matrix representation for $\phi$ with respect to the bases $\{u_{\chi,r}: r \in Q\}$ (for inputs) and $\{\delta_{\epsilon_t}: t \in T\}$ (for outputs) is a square submatrix of $M$ (provided that we order the input and output bases consistently with the orderings of $R$ and $S$ used to produce $M$).
By the nonvanishing minors property of $M$, the matrix for our map is invertible, so our map is bijective.
\end{proof}

\section{Prime fields}\label{Doris}

\subsection{Prime fields and their characters have the strong uncertainty property}\label{Sandor}

We now show that if $\Fp$ is a prime field and $\chi$ is a complex-valued character defined on a subgroup of $\Fpu$, then $(\Fp,\chi)$ has the strong uncertainty property.
Our proof relies on Chebotar\"ev's theorem (Theorem \ref{James}), an equivalent form of which we now state.
The \emph{weight} $\wt(f)$ of a polynomial $f$ is the number of nonzero coefficients of $f$.
Chebotar\"ev's theorem is equivalent to the following statement \cite{Frenkel, Pakovich}. 
\begin{lemma}\label{Fred}
Let $p$ be prime and $f$ be a nonzero polynomial with complex coefficients with $\deg f \leq p-1$.  If $f$ has $m$ different roots that are $p$th roots of unity,  then $\wt(f) > m$.
\end{lemma}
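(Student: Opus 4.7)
The plan is to derive Lemma \ref{Fred} directly from Chebotar\"ev's theorem (Theorem \ref{James}) by translating the vanishing-at-roots condition into a linear system whose coefficient matrix is a submatrix of the unscaled $p \times p$ DFT matrix, and then using nonvanishing of minors to force a contradiction.

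First, I would expand $f(x) = \sum_{j=0}^{p-1} c_j x^j$ (padding with zero coefficients if $\deg f < p-1$) and set $J = \{j \in \{0,1,\ldots,p-1\} : c_j \neq 0\}$, so that $|J| = \wt(f) =: w$. With $\zeta = \exp(2\pi i/p)$, the $p$th roots of unity are exactly $\zeta^0, \zeta^1, \ldots, \zeta^{p-1}$, so the $m$ distinct $p$th roots of unity on which $f$ vanishes can be written as $\zeta^{k_1}, \ldots, \zeta^{k_m}$ for distinct exponents $k_1,\ldots,k_m \in \{0,1,\ldots,p-1\}$. The hypothesis $f(\zeta^{k_i}) = 0$ for $i = 1,\ldots,m$ becomes the linear system
\[
\sum_{j \in J} c_j \zeta^{k_i j} = 0 \qquad (i = 1,\ldots,m)
\]
in the unknowns $\{c_j\}_{j \in J}$, and by the definition of $J$ this system has the nonzero solution vector $(c_j)_{j \in J}$.

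Next, I would argue by contradiction: suppose $w \leq m$. Then I can select a subset $K' \subseteq \{k_1,\ldots,k_m\}$ with $|K'| = w$ and consider the resulting $w \times w$ coefficient matrix $M = [\zeta^{kj}]_{k \in K',\, j \in J}$. Because $K', J \subseteq \{0,1,\ldots,p-1\}$ and $|K'| = |J| = w$, this is precisely a $w \times w$ minor-defining submatrix of $\sqrt{p}\,F_p$ (the unscaled DFT matrix from \eqref{Vincent}), obtained by removing the rows indexed outside $K'$ and the columns indexed outside $J$. Chebotar\"ev's theorem (Theorem \ref{James}) guarantees $\det M \neq 0$, so $M$ is invertible.

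The invertibility of $M$ combined with the $w$ equations drawn from $K'$ forces $c_j = 0$ for every $j \in J$, contradicting $c_j \neq 0$ for $j \in J$. Hence $w > m$, which is the claim. I expect no real obstacle here beyond careful bookkeeping of indices; the content of the lemma is essentially a restatement of nonvanishing minors in the language of polynomials, with the translation step being the identification of the Vandermonde-type system $\sum_{j \in J} c_j \zeta^{kj} = 0$ with a submatrix of the DFT matrix.
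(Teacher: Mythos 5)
Your proof is correct, and it is the standard translation between the matrix and polynomial formulations of Chebotar\"ev's theorem. Note that the paper does not actually write out a proof of Lemma \ref{Fred}: it simply asserts the equivalence and cites Frenkel and Pakovich; your argument supplies precisely the direction of that equivalence needed here (Theorem \ref{James} implies Lemma \ref{Fred}), by viewing the vanishing conditions $\sum_{j\in J} c_j \zeta^{kj}=0$ as a homogeneous system whose coefficient matrix is a submatrix of $\sqrt{p}\,F_p$, so a nonzero solution would force a vanishing minor.
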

This in turn implies the following technical result which we use to prove the strong uncertainty property over prime fields.
\begin{lemma}\label{David}
Let $p$ be a prime, let $H \leq \Fpu$, and let $\chi\colon H \to \C$ be a character.
Let $A, B \subseteq \Fq$ (if $\chi$ is trivial) or $A, B \subseteq \Fqu$ (if $\chi$ is nontrivial), and suppose that each of these two sets has the property that no two of its elements lie in the same $H$-orbit.
For each $a \in A$, let $c_a \in \C$, and suppose that there is some $a \in A$ such that $c_a\not=0$.
Let $\zeta=\exp(2\pi i/p)$.
If
\[
\sum_{a \in A} c_a \sum_{h \in H} \chi(h) \zeta^{h a b} = 0 \quad \text{for all $b \in B$},
\]
then $|H B| < |H A|$.
\end{lemma}
\begin{proof}
For each $x \in \Fp=\Z/p\Z$, let $\lambda(x)$ denote the unique element of $\Z$ with $0 \leq \lambda(x) < p$ such that $\lambda(x)+p\Z=x$.
Then let
\begin{equation}\label{Davos}
f(z)=\sum_{a \in A} c_a \sum_{h \in H} \chi(h) z^{\lambda(h a)} \in \C[z],
\end{equation}
which satisfies
\[
\deg f < p \qquad\text{and}\qquad \wt(f) \leq |H A|.
\]
Note that $f(z)$ is nonzero because at least one $c_a$ is nonzero, every $\chi(h)$ is nonzero, elements of $A$ represent distinct $H$-orbits, and the only power of $z$ that can arise from more than one $(a,h)$ pair is $z^0$ (which only arises if $a=0$, and this can only occur when $\chi$ is trivial, in which case the constant term in \eqref{Davos} is $c_0 |H|$).

The set
\begin{equation*}
U=\{\zeta^{h b}: h \in H, b \in B\}
\end{equation*}
contains $|H B|$ distinct $p$th roots of unity.
If we take any $u \in U$, say $u=\zeta^{g b}$ with $g \in H$ and $b \in B$, then
\begin{align*}
f(u)
& =\sum_{a \in A} c_a \sum_{h \in H} \chi(h) \zeta^{\lambda(h a) g b} && \text{by \eqref{Davos}} \\
& =\sum_{a \in A} c_a \sum_{j \in H} \chi(g^{-1} j) \zeta^{j a b} && \text{since $\zeta$ has order $p$ and $g\in H$} \\
& =\conj{\chi(g)} \cdot 0 && \text{by our initial assumption}.
\end{align*}
Thus, $f(z)$ vanishes at $|H B|$ distinct $p$th roots of unity.
So by Lemma \ref{Fred}, we have $\wt(f) > |H B|$, and recall that $\wt(f) \leq |H A|$.
\end{proof}
We are now ready to prove that prime fields and their characters always enjoy the strong uncertainty property.
\begin{theorem}\label{Mary}
If $p$ is prime, $H\leq \Fpu$, and $\chi\colon H \to \Cu$ is a character, then $(\Fp,\chi)$ has the strong uncertainty property.
\end{theorem}
\begin{proof}
We shall prove that $(\Fp,\chi)$ has the strong uncertainty property using the equivalent characterization in Proposition \ref{Francis}\ref{Carlos}.
Let each of $R$ and $S$ be a complete set of representatives of the $H$-orbits in $\Fq$ (if $\chi$ is trivial) or $\Fqu$ (if $\chi$ is nontrivial).
Let $A \subseteq R$ and $B \subseteq S$ with $|A|=|B|$, and define $\phi: \{f \in \chifuncs: \supp(f)\cap R \subseteq A\} \to \C^{\epsilon_B}$ with $\phi(f)=\ft{f}\vert_{\epsilon_B}$.
We need to prove that $\phi$ is a $\C$-linear isomorphism.
Corollary \ref{Gilda} tells us that the domain of $\phi$ is an $|A|$-dimensional $\C$-subspace of $\chifuncs$.
The codomain of $\phi$ is an $|\epsilon_B|$-dimensional $\C$-subspace of $\C^{\epsilon_{\Fq}}$.
We know that $\phi$ is a $\C$-linear map since both the Fourier transform and the projection from $\C^{\epsilon_{\Fq}}$ to $\C^{\epsilon_B}$ are $\C$-linear.
It remains to show that $\phi$ is bijective.

We suppose that $\phi$ is not bijective in order to obtain a contradiction.
Then $\phi$ is neither injective nor surjective since the $\C$-dimension ($|A|$) of its domain equals the $\C$-dimension ($|\epsilon_B|=|B|$) of its codomain.

Since $\phi$ is noninjective, its kernel is nontrivial, so there is a nonzero $\chi$-symmetric function $g$ with $\supp(g)\cap R \subseteq A$ such that $\ft{g}(\epsilon_b)=0$ for each $b \in B$.
For each $r \in R$, let $u_{\chi,r}$ be as defined in \eqref{Ursula}.
By Corollary \ref{Gilda}, we can write $g=\sum_{a \in A} c_a u_{\chi,a}$ with $c_a \in \C$ for each $a \in A$, and at least one $c_a$ is nonzero.
Since $\ft{g}(\epsilon_b)=\epsilon_b(g)=0$ for each $b \in B$, we have
\begin{equation}\label{Deborah}
\sum_{a \in A} c_a \epsilon_b(u_{\chi,a})=0 \quad \text{for all $b \in B$}.
\end{equation}

Since $\phi$ is nonsurjective, its cokernel is nontrivial, so there is a collection $\{d_b\}_{b \in B}$ of complex numbers (with at least one $d_b$ nonzero) such that $\sum_{b \in B} d_b \ft{f}(\epsilon_b)=0$ for every $f$ in the domain of $\phi$.
In particular, since Corollary \ref{Gilda} tells us that $u_{\chi,a}$ from \eqref{Ursula} is in the domain of $\phi$ for each $a \in A$, we have 
\begin{equation}\label{Eleanor}
\sum_{b \in B} d_b \epsilon_b(u_{\chi,a})=0 \quad \text{for all $a \in A$}.
\end{equation}

Let $\zeta=\exp(2\pi i/p)$.
The canonical additive character $\epsilon \colon \Fp \to \C$ is $\epsilon(x)=\zeta^x$, so we have $\epsilon_b(x)=\zeta^{b x}$ if $b \in B$.
Using this fact and the definition of $u_{\chi,a}$ from \eqref{Ursula}, equations \eqref{Deborah} and \eqref{Eleanor} become
\begin{align*}
\sum_{a \in A} c_a \sum_{h \in H} \chi(h) \zeta^{h a b} & = 0 \quad \text{for all $b \in B$, and} \\
\sum_{b \in B} d_b \sum_{h \in H} \chi(h) \zeta^{h b a} & = 0 \quad \text{for all $a \in A$}.
\end{align*}
Then Corollary \ref{David} shows that the first equation implies $|H B| < |H A|$, while the second implies $|H A| < |H B|$, and so we obtain the contradiction we seek.
\end{proof}
\begin{remark}
Since we have proved that $(\Fp,\chi)$ has the strong uncertainty property when $\Fp$ is a prime field, Theorem \ref{Samantha} shows that the strong uncertainty bounds are sharp, and so Theorem \ref{Sarah} follows.
\end{remark}
In view of Proposition \ref{Melanie}, we immediately obtain the following equivalent theorem.
\begin{theorem}\label{Theresa}
Let $p$ be prime, let $H\leq \Fpu$, let $\chi\colon H \to \Cu$ be a character, and let each of $R$ and $S$ be a complete set of $H$-orbit representatives of $\Fqu$ (if $\chi$ is nontrivial) or of $\Fq$ (if $\chi$ is trivial).  Then every $(\chi,R,S)$-compressed Fourier matrix has the nonvanishing minors property.
\end{theorem}
\begin{remark}\label{Clarence}
In view of Examples \ref{Edith} and \ref{Edward}, Theorems \ref{Colin} and \ref{Sidney} are immediate corollaries of Theorem \ref{Theresa}.
\end{remark}

\subsection{The Cauchy--Davenport Theorem}\label{Eddard}

If $A,B \subseteq \Fp$ are nonempty, then
\begin{equation}\label{Jojen}
|A+B| \geq \min\{ |A| + |B|- 1,p\},
\end{equation}
in which $A+B = \{a+b : a\in A, b\in B\}$.
This is the \emph{Cauchy--Davenport inequality}, a foundational result in
additive combinatorics \cite{TaoVu}.
In \cite{Tao} Tao used Theorem \ref{Hugo} to obtain a new proof of this
result.
Now suppose that $H \subseteq\Fpu$ acts on $\Fp$ by multiplication.
If $A,B$ are assumed to be $H$-closed, then one might wonder whether
\eqref{Jojen} can be improved, and if so, whether we can obtain such an
improvement by using the new uncertainty principle (Theorem
\ref{Margaret}).
We show that one can improve \eqref{Jojen} slightly when the sets involved do not contain $0$, and then give some examples showing that further improvements along these lines are not possible.
\begin{theorem}\label{Vivian}
Let $p$ be an odd prime, let $H$ be a nontrivial subgroup of $\Fpu$, and suppose that $A$ and $B$ are nonempty $H$-closed subsets of $\Fp$ with $0\not\in A$, $0 \not\in B$, and $0\not\in A+B$.  Then $|A|+|B| \leq p-1$, and $|A+B|\geq |A|+|B|$.
\end{theorem}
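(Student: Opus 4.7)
The plan is to derive both conclusions from the classical Cauchy--Davenport inequality \eqref{Jojen} together with an elementary divisibility observation. Somewhat surprisingly, the sharpened uncertainty principle of Theorem \ref{Margaret} is not required for this particular improvement; the obstruction to extending it further will instead come from the divisibility constraints themselves.

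The key observation is that $A$, $B$, and $A+B$ are all $H$-closed subsets of $\Fpu$. For $A+B$, the identity $h(a+b)=ha+hb$ shows it is closed under multiplication by $H$, and the hypothesis $0\notin A+B$ ensures it lies entirely inside $\Fpu$. Every $H$-closed subset of $\Fpu$ is a disjoint union of $H$-cosets in $\Fpu$, so each of $|A|$, $|B|$, $|A+B|$ is a positive multiple of $|H|\geq 2$. Moreover $|H|$ divides $p-1$, so $|H|\nmid p$. Combining $|A+B|\leq p-1$ (from $0\notin A+B$) with the Cauchy--Davenport bound $|A+B|\geq \min(|A|+|B|-1,p)$ yields $|A|+|B|\leq p$. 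Since $|H|$ divides $|A|+|B|$ but not $p$, the sharper inequality $|A|+|B|\leq p-1$ follows.

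For the main bound, the relation $|A|+|B|\leq p-1<p$ lets us remove the minimum in \eqref{Jojen}, giving $|A+B|\geq |A|+|B|-1$. But $|A+B|-(|A|+|B|)$ is the difference of two multiples of $|H|$, hence itself a multiple of $|H|\geq 2$; in particular it cannot equal $-1$. The only multiple of $|H|$ that is $\geq -1$ and different from $-1$ is nonnegative, so $|A+B|\geq |A|+|B|$, as desired. There is no real obstacle here; the only step worth double-checking is the divisibility of $|A+B|$, where one must use the hypothesis $0\notin A+B$ to place $A+B$ inside $\Fpu$ so that its $H$-orbits all have full size $|H|$.
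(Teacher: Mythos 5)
Your proof is correct and is essentially the paper's first proof of this theorem: both deduce the result from the classical Cauchy--Davenport inequality together with the observation that $|A|$, $|B|$, and $|A+B|$ are all multiples of $|H|$ while $p$ is not. The paper also records a second proof that does invoke Theorems \ref{Margaret} and \ref{Sarah} via a convolution argument; your remark that the uncertainty principle is not actually needed here is accurate and is the point of the paper's first proof.
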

We present two ways to prove this result.
The first proof is based on the standard Cauchy--Davenport inequality and congruences for cardinalities of $H$-closed subsets.
\begin{proof}
Note that the sum of two $H$-closed sets is $H$-closed.
Then $A$, $B$, and $A+B$ are all unions of $H$-cosets in $\Fpu$, so their cardinalities are all divisible by $|H|$ by Lemma \ref{Paul}.
We cannot have $|A|+|B| > p$, because then the standard Cauchy--Davenport inequality would make $|A+B|=p$, which is not divisible by $|H|$.
By the same principle $|A|+|B|$ cannot be $p$, so we must have $|A|+|B| \leq p-1$.
Now the standard Cauchy--Davenport inequality says that $|A+B| \geq |A|+|B|-1$, but equality cannot occur since the left hand side is divisible by $|H|$ but the right hand side is not.
\end{proof}
The second proof uses our Fourier methods (Theorems \ref{Margaret} and \ref{Sarah}).
\begin{proof}
Since $0\not\in A+B$, we see that whenever $a \in A$, we must have $-a \not\in B$, and since $0$ is in neither $A$ nor $B$, this means that $|A|+|B| \leq p-1$.
Pick two $H$-closed subsets $X$ and $Y$ of $\Fp$, neither containing zero, with
$|X|=p-1+|H|-|A|$ and $|Y|=p-1+|H|-|B|$, and arrange them to have as little overlap as possible.
Since $A$ and $B$ are nonempty, $H$-closed, and do not contain $0$, the cardinalities we specified for $X$ and $Y$ are nonnegative, not greater than $p-1$, and divisible by $|H|$, as they must be if $X$ and $Y$ are to be $H$-closed and not contain $0$.
To minimize the overlap between $X$ and $Y$, and one can choose $X$ to be any union of the correct number of $H$-cosets, while $Y$ is also a union of $H$-cosets (using as few $H$-cosets in $X$ as possible, given the size of $Y$).
This construction has 
\begin{equation}\label{Zoe}
|X \cap Y|=|X|+|Y|-(p-1)= p-1+2|H|-|A|-|B|.
\end{equation}
Let $\chi$ be a nontrivial character of $H$, and let $\conj{\chi}$ be the conjugate (inverse) character, that is, $\conj{\chi}(h)=\conj{\chi(h)}=\chi(h)^{-1}$ for every $h \in H$.
Since $|A|+|X|=|Y|+|B|=p-1+|H|$, we may use Theorem \ref{Sarah} to obtain a $\chi$-symmetric function $f$ with $\supp(f)=A$ and $\supp(\ft{f})=X$, and also a $\conj{\chi}$-symmetric function $g$ with $\supp(g)=B$ and $\supp(\ft{g})=Y$.
Then Lemma \ref{Augustus} shows that their convolution $f g$ is $\chi_0$-symmetric, where $\chi_0$ is the trivial character of $H$.
And by the nature of convolution, we have $\supp(f g) \subseteq A+B$ and $\supp(\widehat{f g})=X\cap Y$.
In particular, $f g$ vanishes at $0$ (since $0\not\in A+B$ by hypothesis) and $\widehat{f g}$ vanishes at $0$ because of our choice of $X$ and $Y$.
Thus, Theorem \ref{Margaret} shows that $|\supp(f g)|+|\supp(\widehat{f g})| \geq p+2 |H|-1$, so that $|A+B| + |X \cap Y| \geq p-1+2 |H|$.
Then we use \eqref{Zoe} to obtain $|A+B| \geq |A|+|B|$.
\end{proof}
If one retains the all but one of the hypotheses about $A$ and $B$ in Theorem \ref{Vivian}, the lower bound $|A+B| \geq |A|+|B|$ no longer follows.
For example, if one of $A$ or $B$ is $\{0\}$ and the other is a nonempty $H$-closed subset of $\Fqu$, then $A+B$ does not contain $0$, but $|A+B|=|A|+|B|-1$.
Or if $H=\{1,-1\}$ and $A=B=\{-a,a\}$ with $a\not=0$, then $A+B=\{-2 a,0,2 a\}$ has $|A+B|=3=|A|+|B|-1$.
Or if $H$ is a proper subgroup of $\Fpu$, then let $A$ be a nonempty $H$-closed proper subset of $\Fpu$, let $b \in \Fpu\setminus A$, and let $B$ be the non-$H$-closed set $\{-b\}$, so that $0 \not\in A, B, A+B$ but $|A+B|=|A|=|A|+|B|-1$.
And of course if one of $A$ or $B$ is empty and the other is not, then $|A+B|=0 < |A|+|B|$.

An interesting corollary of Theorem \ref{Vivian} is that if $p$ is an odd prime, then certain sets of consecutive elements of $\Fp$ cannot be $H$-closed for any nontrivial $H \leq \Fpu$.
This gives examples of how proper subsets of prime fields that are highly structured with respect to addition cannot simultaneously be highly structured with respect to multiplication.
\begin{corollary}
Let $p$ be an odd prime, and let $A=\{a,a+1,\ldots,a+b\}$ be either a subset of $\{0,1,2,\ldots,(p-1)/2\} \subset \Fp$ or else a subset of $\{(p+1)/2,(p+3)/2,\ldots,p-1,0\} \subset \Fp$.  If $A$ is neither empty nor equal to $\{0\}$, then there is no nontrivial subgroup $H$ of $\Fpu$ such that $A$ is $H$-closed.
\end{corollary}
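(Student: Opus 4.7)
The plan is to assume for contradiction that $A$ is $H$-closed for some nontrivial subgroup $H \leq \Fpu$ and then produce a sumset bound that contradicts Theorem \ref{Vivian}. A first step is to reduce to the case $A \subseteq \{0,1,\ldots,(p-1)/2\}$. If instead $A$ lies in the second interval $\{(p+1)/2,\ldots,p-1,0\}$, then $-A=\{-a:a\in A\}$ is $H$-closed (since $h\cdot(-a)=-(ha)\in -A$ by $H$-closure of $A$), is still a block of consecutive elements of $\Fp$ (the map $x\mapsto -x$ sends consecutive blocks to consecutive blocks), and negation interchanges the two intervals in the hypothesis. So it suffices to handle the first case.

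Next I would strip out $0$. Set $A'=A\setminus\{0\}$. This is $H$-closed, because if $a\in A'$ and $h\in H$ then $ha\in A$ by hypothesis and $ha\neq 0$ since $h,a\in \Fpu$. Because $A\neq\varnothing$ and $A\neq\{0\}$, the set $A'$ is nonempty. Moreover, $A'$ is still a block of consecutive integers lying in $\{1,2,\ldots,(p-1)/2\}$: if $0\notin A$ this is immediate, while if $0\in A$ then a consecutive block in $\{0,1,\ldots,(p-1)/2\}$ containing $0$ must be of the form $\{0,1,\ldots,b\}$ with $b\geq 1$, giving $A'=\{1,2,\ldots,b\}$.

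Now I would apply Theorem \ref{Vivian} with $B=A'$. The hypotheses are met: $A'$ is nonempty and $H$-closed, $0\notin A'$ by construction, and $A'+A'\subseteq\{2,3,\ldots,p-1\}$ so $0\notin A'+A'$. The theorem yields $|A'+A'|\geq 2|A'|$. However, if $A'=\{k,k+1,\ldots,k+m\}$, then $A'+A'=\{2k,2k+1,\ldots,2k+2m\}$, so $|A'+A'|=2m+1=2|A'|-1$, contradicting the lower bound. This contradiction shows no such $H$ exists. I do not anticipate any serious obstacle here: the argument is simply the observation that a set of consecutive integers is as ``un-doubling-free'' as possible, so it cannot achieve the extra $+1$ that $H$-closure forces via Theorem \ref{Vivian}; the only care required is in the preliminary reduction to the first interval and in noting that removing $0$ from a consecutive block in $\{0,1,\ldots,(p-1)/2\}$ leaves another consecutive block.
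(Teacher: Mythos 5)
Your proof is correct and follows essentially the same route as the paper's: reduce to a consecutive block in $\{1,\ldots,(p-1)/2\}$ by negating if necessary and discarding $0$, note that a block of consecutive nonzero elements in that range has $|A'+A'|=2|A'|-1$ with $0\notin A'+A'$, and then invoke Theorem \ref{Vivian} to get the contradicting lower bound $|A'+A'|\geq 2|A'|$. You merely spell out the two reduction steps in more detail than the paper, which states them in one sentence.
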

\begin{proof}
For any $H \leq \Fpu$, note that $A$ is $H$-closed if and only if $\{-a: a \in A\}$ is $H$-closed, and also $A$ is $H$-closed if and only if $A\setminus\{0\}$ is $H$-closed.
Thus, without loss of generality, we may assume that $A$ is a nonempty subset of $\{1,2,\ldots,(p-1)/2\}$.
Given the range of elements in $A$, we have $A+A=\{2 a, 2 a+1, \ldots, 2(a+b)\}$ with $0\not\in A+A$ and $|A+A|=|A|+|A|-1$.
Since $0\not\in A$, Theorem \ref{Vivian} tells us that $A$ cannot be $H$-closed for any nontrivial $H \leq \Fpu$.
\end{proof}

\section{Generic finite fields}\label{Egbert}
Theorem \ref{Mary} completely addresses the strong uncertainty property over prime fields.
What happens if we move to non-prime fields?
In this section we systematically investigate this question.
We also pose, at the end, an open problem (Problem \ref{Jorah}).

\subsection{Lack of strong uncertainty property}
Suppose that $\Fq$ is a finite field of characteristic $p$ and order $q=p^n$.
An additive character of $\Fqu$ is of the form 
$\epsilon_a(x)=\exp(2\pi i \Tr(a x)/p)$, in which $\Tr\colon \Fq\to\Fp$ is the absolute trace, a $(q/p)$-to-one function from $\Fq$ onto $\Fp$.
If $\Fq$ is not a prime field (i.e., if $n >1$), then the Fourier transform on its entire domain $\groupring$ does not have the strong uncertainty property.
This is a consequence of a more general result, which we show first.
\begin{theorem}\label{Michael}
Let $F$ be a finite field, and let $H$ be subgroup of $F^\times$ such that $H$ lies entirely within a proper subfield of $F$.  Let $\chi\colon H \to \Cu$ be any character of $H$.  Then $(F,\chi)$ does not have the strong uncertainty property. 
\end{theorem}
\begin{proof}
Let $K$ be a proper subfield of $F$ containing $H$, and let $\Fp$ be the prime subfield of $F$.
Then the absolute trace $\Tr_{F/\Fp}$ from $F$ to $\Fp$ is the composition $\Tr_{K/\Fp} \circ \Tr_{F/K}$, where $\Tr_{K/\Fp}\colon K \to \Fp$ is the absolute trace of $K$ and $\Tr_{F/K}\colon F \to K$ is the relative trace from $F$ to $K$.
Since $\Tr_{F/K}$ is a $(|F|/|K|)$-to $1$ surjective map from $F$ to $K$, let $b$ be a nonzero element of $F$ such that $\Tr_{F/K}(b)=0$.
Then for any $h \in H$, we have $\Tr_{F/\Fp}(h b) = \Tr_{K/\Fp}(h \Tr_{F/K}(b)) = 0$, so that $\epsilon_1(x)=1$ for every $x \in H b$.

Let the functions $u_{\chi,a}$ be as defined in \eqref{Ursula}.
If $\chi$ is trivial, let $f=u_{\chi,0}-u_{\chi,b}$, so that $|\supp(f)|=|H|+1$.
Note that $\ft{f}_{\epsilon_0}=|H|\epsilon_0(0)-\sum_{h \in H} \chi(h) \epsilon_0(h b)=0$ and $\ft{f}_{\epsilon_1}=|H| \epsilon_1(0) - \sum_{h \in H} \chi(h) \epsilon_1(h b)=0$.
Since Corollary \ref{Rebecca} shows that $\supp(\ft{f})$ is $H$-closed, this means that $|\supp(\ft{f})| \leq q-1-|H|$, and so $|\supp(f)|+|\supp(\ft{f})| \leq q$, thus violating the strong uncertainty property.
If $\chi$ is nontrivial, let $g=u_{\chi,b}$, so that $|\supp(g)|=|H|$.
Notice that $\ft{g}_{\epsilon_1}=\sum_{h \in H} \chi(h) \epsilon_1(h b)=\sum_{h \in H} \chi(h)=0$.
Since Corollary \ref{Rebecca} shows that $\supp(\ft{f})$ is $H$-closed, this means that $|\supp(\ft{g})| \leq q-|H|$, and so $|\supp(f)|+|\supp(\ft{f})| \leq q$, again violating the strong uncertainty property.
\end{proof}
\begin{corollary}\label{Genevieve}
Let $\Fq$ be a non-prime field.  Then the Fourier transform on $\groupring$ does not have the strong uncertainty property.
\end{corollary}
\begin{proof}
Recall from Example \ref{Eustace} that the entire domain $\groupring$ of our Fourier transform is the set of $\chi$-symmetric functions where $\chi$ is the trivial character of the trivial subgroup of $\Fqu$.
The trivial subgroup lies in the prime subfield of $\Fq$, which is a proper subfield of $\Fq$ since $\Fq$ is not a prime field.
So we may apply Theorem \ref{Michael}.
\end{proof}
The following corollary says that the analogues of the discrete cosine transform (when $\chi$ is trivial) and the discrete sine transform (when $\chi$ is nontrivial) over non-prime fields also lack the strong uncertainty property.
\begin{corollary}
Let $\Fq$ be a non-prime field of odd characteristic, let $H=\{1,-1\}$, the unique subgroup of order $2$ in $\Fqu$, and let $\chi$ be any character of $H$.
Then $(\Fq,\chi)$ does not have the strong uncertainty property.
\end{corollary}
\begin{proof}
The subgroup $H$ lies in the prime subfield of $\Fq$, which is a proper subfield of $\Fq$ since $\Fq$ is not a prime field.  So we may apply Theorem \ref{Michael}.
\end{proof}

\subsection{Compressed Fourier matrix entries and Gauss sums}
Since a proper subfield of a finite field $\Fq$ has at most $\sqrt{q}$ elements, Theorem \ref{Michael} considers subgroups that are small compared to the size of the field.
We now look at what happens at the other extreme when $H$ is a large subgroup of $\Fqu$.
To determine whether a space of $\chi$-symmetric functions has the strong uncertainty property, it will be useful to investigate the equivalent property (cf.~Proposition \ref{Melanie}) that is stated in terms of nonvanishing minors of compressed Fourier matrices.
The entries of these matrices involve Gauss sums, which we now describe.

For any subgroup $H$ of $\Fqu$, we let $\Hchars$ denote the group of multiplicative characters from $H$ into $\C^{\times}$: this is a cyclic group of order $|H|$.
Restriction of domains from $\Fqu$ to $H$ gives a homomorphism of groups from $\mchars$ to $\Hchars$, which is known to be surjective because each character of $H$ can be extended to a character of $\Fqu$.
Therefore, each character in $\Hchars$ has $|\Fqu:H|$ distinct extensions in $\mchars$.
More specifically, if $\Theta$ is the unique subgroup of order $|\Fqu:H|$ in $\mchars$, then the set of extensions in $\mchars$ of any $\chi \in \Hchars$ is a coset of $\Theta$ in $\mchars$.
The identity element of $\mchars$ is written $\chi_0$ and called the \emph{trivial} (or \emph{principal}) \emph{character}; it has $\chi_0(a)=1$ for all $a \in \Fqu$.

For any $\phi \in \mchars$, we define the 
\emph{Gauss sum}
\[
G(\phi) = \sum_{a \in \Fqu} \epsilon(a) \phi(a).
\]
One can show that $G(\chi_0)=-1$ and $|G(\phi)|=\sqrt{q}$ when $\phi\not=\chi_0$ \cite[Theorem 5.11]{Lidl-Niederreiter}.

We first provide a lemma that will help us calculate the entries of $(\chi,R,S)$-compressed Fourier matrices.
\begin{lemma}\label{Jason}
Let $\Fq$ be any finite field, let $m$ be a positive divisor of $q-1$, and let $$H=\Fqum=\{a^m: a \in \Fqu\},$$ the unique subgroup of index $m$ in $\Fqu$.
Let $\chi:H\to\C^{\times}$ be a character of $H$, and let $X$ be the set of extensions of $\chi$ in $\mchars$.
Let $R$ and $S$ be sets of representatives of the $H$-orbits of $\Fq$ (if $\chi$ is trivial) or of $\Fqu$ (if $\chi$ is nontrivial).
Then for any $r \in R$ and $s \in S$, the $(r,s)$-entry of a $(\chi,R,S)$-compressed Fourier matrix is
\[
\epsilon_s(u_{\chi,r}) = \begin{cases}
|H| & \text{if $r s=0$,} \\[7pt]
\displaystyle\frac{1}{m}\sum_{\chi'\in X} \conj{\chi'}(rs) G(\chi') & \text{if $r s\not=0$.}
\end{cases}
\]
\end{lemma}
\begin{proof}
Let $\Theta$ be the unique subgroup of order $m$ in $\mchars$.
For any $a \in \Fqu$, one can show that
\[
\frac{1}{m}\sum_{\theta\in\Theta} \theta(a) =
\begin{cases}
1 & \text{if $a \in \Fqum$,} \\
0 & \text{otherwise.}
\end{cases}
\]
Let $\chi_1\in\mchars$ be any multiplicative character of $\Fqu$ that extends $\chi$.
Given any $r \in R$ and $s\in S$, the $(r,s)$-entry of our $(\chi,R,S)$-compressed Fourier matrix is
\begin{align}
\begin{split}\label{Roger}
\epsilon_s(u_{\chi,r})
& = \epsilon_s\left(\sum_{h \in H} \chi(h) [h r]\right) \\
& = \sum_{h \in H} \chi(h)\epsilon_s(h r) \\
& = \sum_{a \in \Fqu} \frac{1}{m} \sum_{\theta \in \Theta} \theta(a) \chi_1(a)\epsilon_s(a r),
\end{split}
\end{align}
and we note that $\theta\chi_1$ runs through the set $X$ of extensions of $\chi$ in $\mchars$ as $\theta$ runs through $\Theta$, so we have
\[
\epsilon_s(u_{\chi,r}) = \frac{1}{m} \sum_{\chi' \in X} \sum_{a \in \Fqu} \chi'(a) \epsilon(r s a).
\]
If $r s\not=0$, then we can reparameterize with $b=r s a$ to get
\begin{align*}
\epsilon_s(u_{\chi,r})
& = \frac{1}{m}\sum_{\chi' \in X} \conj{\chi'}(rs) \sum_{b\in\Fqu} \chi'(b) \epsilon(b) \\
& = \frac{1}{m}\sum_{\chi' \in X} \conj{\chi'}(rs) G(\chi').
\end{align*}
If $r s=0$, then $\chi$ must be the trivial character of $H=\Fqum$, and so we can take $\chi_1=\chi_0$ in \eqref{Roger} to obtain
\[
\epsilon_s(u_{\chi,r}) = \frac{1}{m} \sum_{\theta \in \Theta} \sum_{a \in \Fqu} \theta(a).
\]
The inner sum is zero unless $\theta$ is the trivial character, so if $r s =0$, then $\epsilon_s(u_{\chi,r})=(q-1)/m=|H|$.
\end{proof}

Now we investigate the extreme case $H = \Fqu$ and find that, unlike the other extreme case when $H=\{1\}$, every $(\Fq,\chi)$ has the strong uncertainty property.
\begin{proposition}\label{Erwin}
Let $\Fq$ be any finite field, let $H=\Fqu$, and let $\chi:H\to\C^{\times}$ be a character.
Then $(\Fq,\chi)$ has the strong uncertainty property.
\end{proposition}
\begin{proof}
We shall prove the strong uncertainty property using the nonvanishing minors criterion from Proposition \ref{Melanie}.
First suppose that $\chi$ is the trivial character $\chi_0$.
We may take $R=S=\{0,1\}$ as our sets of $H$-orbit representatives of $\Fq$.
Then we apply Lemma \ref{Jason}, where $X=\{\chi_0\}$.
It tells us that our $(\chi,R,S)$-compressed Fourier matrix is
\[
\begin{bmatrix}
q-1 & q-1 \\
q-1 & G(\chi_0)
\end{bmatrix} = \begin{bmatrix}
q-1 & q-1 \\
q-1 & -1
\end{bmatrix},
\]
which has the nonvanishing minors property.

Now suppose that $\chi$ is a nontrivial character.
We may take $R=S=\{1\}$ as our sets of $H$-orbit representatives of $\Fqu$.
Then Lemma \ref{Jason} with $X=\{\chi\}$ shows that our $(\chi,R,S)$-compressed Fourier matrix is
\[
\begin{bmatrix}
G(\chi)
\end{bmatrix},
\]
which has the nonvanishing minors property since Gauss sums are nonzero.
\end{proof}
Consider the case when $H=\Fqu$ as in Proposition \ref{Erwin}, and interpret elements of $\groupring$ as functions from $\Fq$ to $\C$ in the natural way.
Then when $\chi$ is nontrivial, the $\chi$-symmetric functions are the scalar multiples of the character $\chi$, and when $\chi$ is trivial, the $\chi$-symmetric functions consist of linear combinations of $\chi$ and the indicator function for $0$.

\subsection{Subgroups of index $2$}

Corollary \ref{Genevieve} and Proposition \ref{Erwin} deal with rather trivial extreme cases when $H=\{1\}$ (in which we do not have the strong uncertainty property) and $H=\Fqu$ (in which we do).
However, the question of what happens between these extremes is largely open.
In this section and the next, we list some results for when the subgroup $H$ is neither the trivial group nor the full multiplicative group of the field.
If $q$ is odd and $H$ is the unique subgroup of $\Fqu$ of index $2$, the following theorems tell us exactly when $(\F_q,\chi)$ has the strong uncertainty property.
\begin{theorem}
Let $\Fq$ be any finite field with $2 \mid (q-1)$, let $H=\Fqusquared=\{a^2: a \in \Fqu\}$, the unique subgroup of index $2$ in $\Fqu$.
Let $\chi:H\to\C^{\times}$ be the trivial character.
Then $(\Fq,\chi)$ has the strong uncertainty property.  
\end{theorem}
\begin{proof}
We shall prove the strong uncertainty property using the nonvanishing minors criterion from Proposition \ref{Melanie}.
Let $\alpha$ be a non-square in $\Fqu$, and then we may use $R=S=\{0,1,\alpha\}$ as our sets of representatives of $H$-orbits in $\Fq$.
We invoke Lemma \ref{Jason} with $X=\{\chi_0,\eta\}$, where $\eta$ is the quadratic character, to see that our $(\chi,R,S)$-compressed Fourier matrix is
\[
\begin{bmatrix}
\frac{q-1}{2} & \frac{q-1}{2} & \frac{q-1}{2} \\
\frac{q-1}{2} & \frac{G(\chi_0)+G(\eta)}{2} & \frac{G(\chi_0)-G(\eta)}{2} \\
\frac{q-1}{2} & \frac{G(\chi_0)-G(\eta)}{2} & \frac{G(\chi_0)+G(\eta)}{2}
\end{bmatrix},
\]
which is $1/2$ times the matrix
\[
M=\begin{bmatrix}
q-1 & q-1 & q-1 \\
q-1 & -1+G(\eta) & -1-G(\eta) \\
q-1 & -1-G(\eta) & -1+G(\eta)
\end{bmatrix}
\]
because $G(\chi_0)=-1$.
So our $(\chi,R,S)$-compressed Fourier matrix has the nonvanishing minors property if and only if $M$ has it.
Since $|G(\eta)|=\sqrt{q}$, we have $1 < |G(\eta)| < q$, so no entry of $M$ is $0$, nor is any $2\times 2$ minor of $M$ equal to $0$, and the determinant of $M$ is $-4 q(q-1) G(\eta)\not=0$.
Thus, $M$ has the nonvanishing minors property.
\end{proof}
\begin{theorem}\label{Ernie}
Let $\Fq$ be any finite field with $2 \mid (q-1)$, let $H=\Fqusquared=\{a^2: a \in \Fqu\}$, the unique subgroup of index $2$ in $\Fqu$.
Let $\chi:H\to\C^{\times}$ be a nontrivial character.
Then $(\Fq,\chi)$ has the strong uncertainty property if and only if $G(\chi_1)\not=\pm G(\chi_2)$, where $\chi_1$ and $\chi_2$ are the two characters in $\mchars$ that extend $\chi$. 
\end{theorem}
\begin{proof}
Recall Proposition \ref{Melanie}, which gives the nonvanishing minors criterion for the strong uncertainty property.
Let $\alpha$ be a non-square in $\Fqu$, and then we may use $R=S=\{1,\alpha\}$ as our sets of representatives of $H$-orbits in $\Fqu$.
We invoke Lemma \ref{Jason} with $X=\{\chi_1,\chi_2\}$, which is a coset in $\mchars$ of the subgroup $\Theta=\{\chi_0,\eta\}$, where $\eta$ is the quadratic character.  Therefore, $\chi_2=\eta\chi_1$ and so $\chi_2(\alpha)=-\chi_1(\alpha)$.  Then we see that our $(\chi,R,S)$-compressed Fourier matrix is
\[
\begin{bmatrix}
\frac{G(\chi_1)+G(\chi_2)}{2} & \frac{\conj{\chi_1}(\alpha)\left(G(\chi_1)-G(\chi_2)\right)}{2} \\
\frac{\conj{\chi_1}(\alpha)\left(G(\chi_1)-G(\chi_2)\right)}{2} & \frac{\conj{\chi_1}(\alpha^2)\left(G(\chi_1)+G(\chi_2)\right)}{2}
\end{bmatrix},
\]
and by scaling the second row and second column by $\chi_1(\alpha)$ and then scaling the whole matrix by $2$, we obtain the matrix
\[
M=\begin{bmatrix} G(\chi_1)+G(\chi_2) & G(\chi_1)-G(\chi_2) \\
G(\chi_1)-G(\chi_2) & G(\chi_1)+G(\chi_2) \end{bmatrix},
\]
which has the nonvanishing minors property if and only if our $(\chi,R,S)$-compressed Fourier matrix does.
We see that $\det M=4 G(\chi_1) G(\chi_2)$, which does not vanish, since the two Gauss sums in the product do not vanish.
The $1\times 1$ minors are all nonvanishing if and only if $G(\chi_1)\not=\pm G(\chi_2)$.
\end{proof}
\begin{remark}\label{Daario}
To make full use of Theorem \ref{Ernie}, we would like to know precise conditions on $\chi$ such that $G(\chi_1)=\pm G(\chi_2)$, where $\chi_1$ and $\chi_2$ are the two extensions of our nontrivial character $\chi \colon \Fqusquared \to \C$.
This condition is often but not always met.
For example, consider the finite field $\F_{25}$.
We let $\alpha$ be a primitive element of this field satisfying the polynomial $x^2-x+2$, and let $\omega\colon \F_{25}^\times \to \C$ be the multiplicative character that maps $\alpha$ to $\zeta=\exp(2\pi i/24)$.
If we let $\xi=\exp(2\pi i/5)$, then one notes that the set $\{\xi^m \zeta^n: 1 \leq m \leq 4, 0 \leq n < 8\}$ of $32$ elements is a $\Q$-basis of the field $\Q(\xi,\zeta)$ in which the Gauss sums over $\F_{25}$ lie.  The corresponding Gauss sums are as displayed in Table \ref{Victor}.
We can write $\chi_1=\omega^j$ and then $\chi_2=\eta \chi_1=\omega^{j+12}$.   From our table, we see that $G(\chi_1)=G(\chi_2)$ if and only if $j \in \{3,9,15,21\}$.
We also see that $G(\chi_1)=-G(\chi_2)$ if and only if $j \in \{4,8,16,20\}$.
Thus, Theorem \ref{Ernie} tells us that $(\F_{25},\chi)$ fails to have the strong uncertainty property if and only if $\chi$ is one of the four characters of $\Fqusquared$ whose order is $3$ or $4$.
\end{remark}
\begin{table}[ht]
\caption{Gauss Sums for $\F_{25}$}\label{Victor}
\begin{center}
\begin{tabular}{c|c}
$j$ & $G(\omega^j)$ \\
\hline
$0$ & $(\xi+\xi^4)(1)+(\xi^2+\xi^3)(1)=-1$ \\
$4$, $12$, or $20$ & $(\xi+\xi^4)(5)+(\xi^2+\xi^3)(5)=-5$ \\
$8$ or $16$ & $(\xi+\xi^4)(-5)+(\xi^2+\xi^3)(-5)=5$ \\
$6$ & $(\xi+\xi^4)(1+2\zeta^6)+(\xi^2+\xi^3)(-1-2\zeta^6)$ \\
$18$ & $(\xi+\xi^4)(1-2\zeta^6)+(\xi^2+\xi^3)(-1+2\zeta^6)=\conj{G(\omega^6)}$ \\
$2$ or $10$ & $(\xi+\xi^4)(-2+\zeta^6)+(\xi^2+\xi^3)(2-\zeta^6)$ \\
$14$ or $22$ & $(\xi+\xi^4)(-2-\zeta^6)+(\xi^2+\xi^3)(2+\zeta^6)=\conj{G(\omega^2)}$ \\
$3$ or $15$ & $(\xi-\xi^4)(-2+\zeta^6)+(\xi^2-\xi^3)(1+2\zeta^6)$ \\
$9$ or $21$ & $(\xi-\xi^4)(-2-\zeta^6)+(\xi^2-\xi^3)(1-2\zeta^6)=-\conj{G(\omega^3)}$ \\
$1$ or $5$ & $(\xi-\xi^4)(1+\zeta+\zeta^5-\zeta^6)+(\xi^2-\xi^3)(1-\zeta^3+\zeta^6+2\zeta^7)$ \\
$19$ or $23$ & $(\xi-\xi^4)(1+\zeta^3+\zeta^6-2\zeta^7)+(\xi^2-\xi^3)(1-\zeta-\zeta^5-\zeta^6)=-\conj{G(\omega)}$ \\
$7$ or $11$ & $ (\xi-\xi^4)(1-\zeta^3+\zeta^6+2\zeta^7)+(\xi^2-\xi^3)(1+\zeta+\zeta^5-\zeta^6)$ \\
$13$ or $17$ & $(\xi-\xi^4)(1-\zeta-\zeta^5-\zeta^6)+(\xi^2-\xi^3)(1+\zeta^3+\zeta^6-2\zeta^7)=-\conj{G(\omega^7)}$
\end{tabular}
\end{center}
\end{table}
Theorem \ref{Ernie} also has some interesting consequences for Gauss and Jacobi sums over prime fields.
\begin{corollary}\label{Horace}
Let $p$ be an odd prime, let $\chi \in \primemchars$, and let $\eta$ be the quadratic character of $\Fpu$.  Then $G(\chi)\not=\pm G(\chi\eta)$.
\end{corollary}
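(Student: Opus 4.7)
The plan is to derive this corollary by applying Theorem \ref{Theresa} and Theorem \ref{Ernie} to the prime field $\F_p$ with the subgroup $H=\F_p^{\times 2}$ of index $2$. The character $\chi\in\primemchars$ is a character of $\F_p^\times$, not of $H$; the bridge is that $\chi$ and $\chi\eta$ restrict to the \emph{same} character of $H$, because $\eta$ takes value $1$ on every square. Writing $\tilde\chi = \chi\vert_H$, the two extensions of $\tilde\chi$ to $\F_p^\times$ form a coset of $\{\chi_0,\eta\}$, so they are precisely $\chi$ and $\chi\eta$.

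First I would handle the main case in which $\tilde\chi$ is nontrivial. By Theorem \ref{Theresa} (applied in the prime-field setting), the pair $(\F_p,\tilde\chi)$ has the nonvanishing minors property. Theorem \ref{Ernie} then applies: it characterizes the nonvanishing minors property for $(\F_q,\tilde\chi)$ with $H=\F_q^{\times 2}$ and $\tilde\chi$ nontrivial as being equivalent to the condition $G(\chi_1)\neq \pm G(\chi_2)$ on the two extensions. Taking $\{\chi_1,\chi_2\}=\{\chi,\chi\eta\}$, this gives exactly $G(\chi)\neq \pm G(\chi\eta)$.

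The remaining case is when $\tilde\chi$ is trivial, i.e.\ when $\chi\in\{\chi_0,\eta\}$, which is not covered by Theorem \ref{Ernie}. Here $\{\chi,\chi\eta\}=\{\chi_0,\eta\}$, and the inequality is immediate from the basic magnitudes of Gauss sums recorded in Section \ref{Egbert}: $G(\chi_0)=-1$ while $|G(\eta)|=\sqrt{p}>1$, so $G(\chi_0)\neq\pm G(\eta)$.

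I do not expect any real obstacle; the only subtlety is bookkeeping the notation so that a character of the \emph{ambient} group $\F_p^\times$ is correctly reinterpreted as one extension of its restriction to the index-$2$ subgroup. The two cases (nontrivial vs.\ trivial restriction) arise naturally from the hypothesis of Theorem \ref{Ernie}, and the trivial case is handled by the elementary Gauss sum magnitudes rather than by invoking the machinery of compressed Fourier matrices.
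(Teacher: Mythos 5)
Your proposal is correct and follows essentially the same route as the paper: the paper likewise dispatches $\chi\in\{\chi_0,\eta\}$ using $G(\chi_0)=-1$ and $|G(\eta)|=\sqrt{p}$, and otherwise restricts $\chi$ and $\chi\eta$ to their common nontrivial character on $H=\F_p^{\times 2}$, invokes Theorem \ref{Theresa} for the nonvanishing minors property, and applies the equivalence of Theorem \ref{Ernie}. (The paper's proof cites ``Theorem \ref{Daario}'' where it means Theorem \ref{Ernie}; you have identified the correct reference.)
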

\begin{proof}
This is clear if $\chi$ is either the trivial character $\chi_0$ or $\eta$ since $G(\chi_0)=-1$ and $|G(\eta)|=\sqrt{q}$, so we may assume $\chi\not\in\{\chi_0,\eta\}$ henceforth.
Let $H = \Fpusquared$ and notice that $\chi$ and $\chi\eta$ restrict to the same nontrivial character on $H$, which we shall call $\chi'$.
Then $(\Fp,\chi')$ has the strong uncertainty property by Theorem \ref{Mary}, and so by Theorem \ref{Ernie} we conclude that $G(\chi)\not=\pm G(\chi\eta)$.
\end{proof}
\begin{corollary}\label{Virgil}
Let $p$ be an odd prime, let $\eta$ be the quadratic character of $\Fpu$, and let $\chi \in \primemchars$ with $\chi\not=\chi_0,\eta$.  Then the Jacobi sum
\[
J(\chi,\eta)=\sum_{a \in \Fp\setminus\{0,1\}} \chi(a)\eta(1-a)
\]
is not real if $p\equiv 1 \pmod{4}$, and is not pure imaginary if $p\equiv 3 \pmod{4}$.  
\end{corollary}
\begin{proof}
By \cite[Theorem 5.21]{Lidl-Niederreiter}, we have
\[
J(\chi,\eta) = \frac{G(\eta)G(\chi)}{G(\chi\eta)}.
\]
We know that $G(\chi)/G(\eta\chi)$ is not real by Corollary \ref{Horace}, and we know that $G(\eta)=\sqrt{p}$ if $p\equiv 1 \pmod{4}$ and $G(\eta)=i\sqrt{p}$ if $p\equiv 3 \pmod{4}$ by \cite[Theorem 5.15]{Lidl-Niederreiter}.
\end{proof}
\begin{remark}
One can apply \cite[Theorem 2.1.4]{Berndt-Evans-Williams} to see that Corollary  \ref{Virgil} (which implies Corollary \ref{Horace}) is a consequence of a result of Evans \cite[Corollary 8]{Evans}, who obtained his result by very different methods.  
\end{remark}
\subsection{Subgroups of larger index}
Having investigated subgroups of index $2$ in $\Fqu$, we now consider subgroups of index $3$.
The details are correspondingly more complicated and suggest the difficulty of determining when $(\Fq,\chi)$ has the strong uncertainty property in general.

\begin{theorem}\label{Esther}
Let $\Fq$ be any finite field of characteristic $p$ with $3 \mid (q-1)$, let $H=\Fqucubed=\{a^3: a \in \Fqu\}$, the unique subgroup of index $3$ in $\Fqu$.
Let $\chi:H\to\C^{\times}$ be the trivial character.
Then $(\Fq,\chi)$ has the strong uncertainty property if and only if $p\equiv 1 \pmod{3}$.
\end{theorem}
\begin{proof}
Recall Proposition \ref{Melanie}, which gives the nonvanishing minors criterion for the strong uncertainty property.
Let the cubic characters in $\mchars$ be denoted by $\kappa$ and $\conj{\kappa}=\kappa^2$.
Let $\zeta_3=\exp(2\pi i/3)$.
Let $\alpha$ be an element of $\Fqu$ with $\conj{\kappa}(\alpha)=\zeta$.
We may take $R=S=\{0,1,\alpha,\alpha^2\}$ as our sets of representatives of $H$-orbits in $\Fq$.
By Lemma \ref{Jason} with $X=\{\chi_0,\kappa,\conj{\kappa}\}$, our $(\chi,R,S)$-compressed Fourier matrix is
\[
\begin{bmatrix}
\frac{q-1}{3} &
\frac{q-1}{3} &
\frac{q-1}{3} &
\frac{q-1}{3} \\
\frac{q-1}{3} &
\frac{G(\chi_0)+G(\kappa)+G(\conj{\kappa})}{3} &
\frac{G(\chi_0)+\zeta_3 G(\kappa)+\conj{\zeta_3} G(\conj{\kappa})}{3} &
\frac{G(\chi_0)+\conj{\zeta_3} G(\kappa)+\zeta_3 G(\conj{\kappa})}{3} \\
\frac{q-1}{3} &
\frac{G(\chi_0)+\zeta_3 G(\kappa)+\conj{\zeta_3} G(\conj{\kappa})}{3} &
\frac{G(\chi_0)+\conj{\zeta_3} G(\kappa)+\zeta_3 G(\conj{\kappa})}{3} &
\frac{G(\chi_0)+G(\kappa)+G(\conj{\kappa})}{3} \\
\frac{q-1}{3} &
\frac{G(\chi_0)+\conj{\zeta_3} G(\kappa)+\zeta_3 G(\conj{\kappa})}{3} &
\frac{G(\chi_0)+G(\kappa)+G(\conj{\kappa})}{3} &
\frac{G(\chi_0)+\zeta_3 G(\kappa)+\conj{\zeta_3} G(\conj{\kappa})}{3}
\end{bmatrix},
\]
which is $1/3$ times the matrix
{\small
\[
M=\begin{bmatrix}
q-1 &
q-1 &
q-1 &
q-1 \\
q-1 &
-1+G(\kappa)+\conj{G(\kappa)} &
-1+\zeta_3 G(\kappa)+\conj{\zeta_3 G(\kappa)} &
-1+\conj{\zeta_3} G(\kappa)+\zeta_3 \conj{G(\kappa)} \\
q-1 &
-1+\zeta_3 G(\kappa)+\conj{\zeta_3 G(\kappa)} &
-1+\conj{\zeta_3} G(\kappa)+\zeta_3 \conj{G(\kappa)} &
-1+G(\kappa)+\conj{G(\kappa)} \\
q-1 &
-1+\conj{\zeta_3} G(\kappa)+\zeta_3 \conj{G(\kappa)} &
-1+G(\kappa)+\conj{G(\kappa)} &
-1+\zeta_3 G(\kappa)+\conj{\zeta_3 G(\kappa)}
\end{bmatrix},
\]}%
because $G(\chi_0)=-1$ and $G(\conj{\kappa})=\kappa(-1)\conj{G(\kappa)}=\conj{G(\kappa)}$ by \cite[Theorem 5.12(iii)]{Lidl-Niederreiter} and the fact $\kappa(-1)=1$ because $-1$ is a cube (of itself).
Our compressed $(\chi,R,S)$-Fourier matrix has the nonvanishing minors property if and only if $M$ does.

Let $p$ be the characteristic of $\Fq$.  If $p\equiv 2 \pmod{3}$, then $q$ must be an even power of $p$ since $q \equiv 1\pmod{3}$.  Then by the Davenport--Hasse Theorem \cite[Theorem 5.14]{Lidl-Niederreiter} and a theorem of Stickelberger \cite[Theorem 5.16]{Lidl-Niederreiter}, we know that $G(\kappa)$ is real.  So the $2\times 2$ submatrix
\[
\begin{bmatrix}
q-1 & q-1 \\
-1+\zeta_3 G(\kappa)+\conj{\zeta_3 G(\kappa)} & -1 + \conj{\zeta_3} G(\kappa)+ \zeta_3 \conj{G(\kappa)}
\end{bmatrix}
\]
has vanishing determinant.

Henceforth we assume that $p\equiv 1 \pmod{3}$.
All of our Gauss sums lie in cyclotomic extensions of $\Q$, on which a $p$-adic valuation is defined.
Stickelberger's theorem on the $p$-adic valuations of Gauss sums \cite[p.~6-7]{Lang} tells us that the $p$-adic valuations of $G(\kappa)$ and $G(\conj{\kappa})$ are $[\Fq:\Fp]/3$ and $2[\Fq:\Fp]/3$, in some order, and recall that $G(\conj{\kappa})=\conj{G(\kappa)}$.
We now examine the various minors of $M$:
\begin{itemize}
\item Because $G(\kappa)$ and $\conj{G(\kappa)}$ have strictly positive $p$-adic valuations, every entry in $M$ has a $p$-adic valuation of $0$ and is therefore nonzero.
\item The $2\times 2$ submatrices of the form
\[
\begin{bmatrix}
q-1 & q-1 \\
q-1 & -1+\alpha G(\kappa)+\conj{\alpha G(\kappa)}
\end{bmatrix}
\]
for some $\alpha \in \{1,\zeta_3,\conj{\zeta_3}\}$ have nonvanishing determinant because
\begin{equation*}
|\alpha G(\kappa)+\conj{\alpha G(\kappa)}|
 \leq 2 |G(\kappa)| 
 = 2\sqrt{q} 
 < q,
\end{equation*}
since $q > 4$ (because $p\equiv 1 \pmod{3}$).
\item The $2\times 2$ submatrices that equal (up to transposition)
\[
\begin{bmatrix}
q-1 & q-1 \\
-1+\alpha G(\kappa)+\conj{\alpha G(\kappa)} & -1 +\beta G(\kappa)+\conj{\beta G(\kappa)}
\end{bmatrix}
\]
with $\alpha,\beta$ distinct elements of $\{1,\zeta_3,\zeta_3^2\}$ have vanishing determinant if and only if \[\frac{\conj{\alpha-\beta}}{\beta-\alpha} \conj{G(\kappa)}=G(\kappa).\]  Since conjugating a power of $\zeta_3$ is the same as squaring it, we would need $$-(\alpha+\beta) \conj{G(\kappa)}=G(\kappa)$$ for our determinant to vanish.  If $\gamma$ is the complex third root of unity distinct from $\alpha$ and $\beta$, then $-(\alpha+\beta)=\gamma$, which has $p$-adic valuation of $0$, and Stickelberger's theorem assures us that the $p$-adic valuations of $G(\kappa)$ and its conjugate are different.
Thus, the determinant of our $2\times 2$ submatrix cannot be $0$.
\item Consider the $2\times 2$ submatrices that equal
\[
\begin{bmatrix}
-1+\alpha G(\kappa)+\conj{\alpha G(\kappa)} & -1 +\beta G(\kappa)+\conj{\beta G(\kappa)} \\
-1+\gamma G(\kappa)+\conj{\gamma G(\kappa)} & -1 +\delta G(\kappa)+\conj{\delta\gamma G(\kappa)}
\end{bmatrix},
\]
where $\alpha,\beta,\gamma,\delta \in\{1,\zeta_3,\zeta_3^2\}$ with $\alpha\not=\beta,\gamma$ and $\alpha\delta=\beta\gamma$.
Then the determinant is
\[
(\alpha\conj{\delta}+\conj{\alpha}\delta-\beta\conj{\gamma}-\conj{\beta}\gamma) |G(\kappa)|^2 +2\Re\big((\beta+\gamma-\alpha-\delta) G(\kappa)\big),
\]
and as it turns out, 
$\alpha\conj{\delta}+\conj{\alpha}\delta-\beta\conj{\gamma}-\conj{\beta}\gamma\in\{\pm 3\}$ and $\beta+\gamma-\alpha-\delta$ must be $3$ times a sixth root of unity, so that the determinant cannot be zero because $|G(\kappa)|=\sqrt{q}>2$ since $p\equiv 1 \pmod{3}$.
\item Now consider the $3\times 3$ submatrices that equal
\[
\begin{bmatrix}
q-1 & q-1 & q-1 \\
q-1 & -1+\alpha G(\kappa)+\conj{\alpha G(\kappa)} & -1 +\beta G(\kappa)+\conj{\beta G(\kappa)} \\
q-1 & -1+\gamma G(\kappa)+\conj{\gamma G(\kappa)} & -1 +\delta G(\kappa)+\conj{\delta\gamma G(\kappa)}
\end{bmatrix},
\]
where $\alpha,\beta,\gamma,\delta \in\{1,\zeta_3,\zeta_3^2\}$ with $\alpha\not=\beta,\gamma$ and $\alpha\delta=\beta\gamma$.
Since $|G(\kappa)|^2=q$, the determinant is $q(q-1)$ times
\[
(\alpha\conj{\delta}+\conj{\alpha}\delta-\beta\conj{\gamma}-\conj{\beta}\gamma)  +2\Re\big((\beta+\gamma-\alpha-\delta) G(\kappa)\big).
\]
In every case $(\alpha\conj{\delta}+\conj{\alpha}\delta-\beta\conj{\gamma}-\conj{\beta}\gamma) \in \{\pm 3\}$ has a $p$-adic valuation of $0$ (since $p\equiv 1 \pmod{3}$).  But Stickelberger's theorem ensures that $G(\kappa)$ and its conjugate have positive $p$-adic valuations, so the determinant is not $0$.
\item Now consider the $3\times 3$ submatrices that equal (up to transposition and permutation of rows and columns)
{\small
\[
\qquad\qquad \begin{bmatrix}
q-1 & q-1 & q-1 \\
-1+G(\kappa)+\conj{G(\kappa)} &
-1+\zeta_3 G(\kappa)+\conj{\zeta_3 G(\kappa)} &
-1+\conj{\zeta_3} G(\kappa)+\zeta_3 \conj{G(\kappa)} \\
-1+\zeta_3 G(\kappa)+\conj{\zeta_3 G(\kappa)} &
-1+\conj{\zeta_3} G(\kappa)+\zeta_3 \conj{G(\kappa)} &
-1+G(\kappa)+\conj{G(\kappa)}
\end{bmatrix}.
\]}%
The determinant of this matrix is $-9(q-1) |G(\kappa)|^2\not=0$.
\item The $3\times 3$ submatrix
{\small
\[
\qquad\qquad \begin{bmatrix}
-1+G(\kappa)+\conj{G(\kappa)} &
-1+\zeta_3 G(\kappa)+\conj{\zeta_3 G(\kappa)} &
-1+\conj{\zeta_3} G(\kappa)+\zeta_3 \conj{G(\kappa)} \\
-1+\zeta_3 G(\kappa)+\conj{\zeta_3 G(\kappa)} &
-1+\conj{\zeta_3} G(\kappa)+\zeta_3 \conj{G(\kappa)} &
-1+G(\kappa)+\conj{G(\kappa)} \\
-1+\conj{\zeta_3} G(\kappa)+\zeta_3 \conj{G(\kappa)} &
-1+G(\kappa)+\conj{G(\kappa)} &
-1+\zeta_3 G(\kappa)+\conj{\zeta_3 G(\kappa)}
\end{bmatrix}.
\]}%
has determinant $27 |G(\kappa)|^2\not=0$.
\item Finally, the full $4\times 4$ matrix $M$ has determinant $27 q(q-1) |G(\kappa)|^ 2\not=0$.\qedhere
\end{itemize}
\end{proof}
By now it should be clear that many subtleties arise in determining in general whether the strong uncertainty property holds when a non-prime field is involved.
We pose the following open question that we hope will inspire further research.
\begin{problem}\label{Jorah}
Find a criterion for when $(\Fq,\chi)$ has the strong uncertainty property.
\end{problem}

\section*{Acknowledgment}
We thank the anonymous referees for many helpful comments.

\bibliographystyle{amsplain}
\providecommand{\bysame}{\leavevmode\hbox to3em{\hrulefill}\thinspace}
\providecommand{\MR}{\relax\ifhmode\unskip\space\fi MR }
\providecommand{\MRhref}[2]{%
  \href{http://www.ams.org/mathscinet-getitem?mr=#1}{#2}
}
\providecommand{\href}[2]{#2}

\end{document}